\date{}
\begin{document}
\author{Rizos Sklinos}
\title{A note on ampleness in the theory of non abelian free groups}

\maketitle

\begin{abstract}
Recently Ould Houcine-Tent \cite{AmpleOK} proved that the theory of non abelian free groups is $n$-ample for any $n<\omega$. 
We give a sequence of primitive elements in $\F_{\omega}$ witnessing the above mentioned result. 
Our proof is not independent from \cite{AmpleOK} as we essentially use some theorems from there. On the other hand our 
witnessing sequence is much simpler.
\end{abstract}

\section{Introduction}
The notion of $n$-ampleness, for some natural number $n$, fits in the general context of geometric stability theory. As the definition 
may look artificial or technical, we first give the historical background of its 
development. We start by working in a vector space $V$ and we consider two finite dimensional subspaces $V_1,V_2\leq V$. Then one can see 
that $dim(V_1 + V_2) = dim(V_1) + dim(V_2) - dim(V_1\cap V_2)$, and the point really is that $V_1$ is linearly independent from 
$V_2$ over $V_1\cap V_2$. In an abstract stable theory the notion of linear independence is replaced by forking independence and the 
above property gives rise to the notion of $1$-basedness. A stable theory $T$ is {\em $1$-based} if there are no $a$, 
$b$ such that $acl^{eq}(a)\cap acl^{eq}(b)=acl^{eq}(\emptyset)$ and $a$ forks with $b$ over $\emptyset$. 
The notion of $1$-basedness turned out to be very fruitful in model theory and one of the major 
results concerning this notion was the following theorem by Hrushovski-Pillay \cite{WeakNormal}.

\begin{thm}\label{HruPill}
Let $\mathcal{G}$ be a $1$-based stable group. Then every definable set $X\subseteq G^n$ is a Boolean combination
of cosets of almost $\emptyset$-definable subgroups of $G^n$. Moreover $G$ is abelian-by-finite.
\end{thm}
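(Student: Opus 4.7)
The plan is to reduce the statement to the classification of complete types over a sufficiently saturated model, and then to exploit the interaction between stabilizers in a stable group and the $1$-basedness hypothesis.

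First I would reduce the assertion about an arbitrary definable set $X\subseteq G^n$ to a statement about complete types. Since a stable group satisfies the descending chain condition on type-definable subgroups, once we know that every complete stationary type in $G^n$ is a translate of the generic type of an almost $\emptyset$-definable subgroup, compactness produces a finite Boolean combination of cosets of such subgroups equal to $X$. So it is enough to show that every complete stationary type $p(x)\in S(M)$, with $x$ ranging in $G^n$, is the generic type of a coset of an almost $\emptyset$-definable subgroup of $G^n$. Let $H:=\Stab(p)$ denote the left stabilizer of $p$; standard stable group theory (Poizat) tells us that $H$ is a type-definable subgroup of $G^n$, that $p$ is the generic type of the coset $aH$ for any $a\models p$, and that the canonical parameter of $aH$ is interdefinable with $Cb(p)$.

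Next I would invoke $1$-basedness. By hypothesis $Cb(p)\subseteq acl^{eq}(a)$ for each $a\models p$, so the coset $aH$ is already almost $a$-definable. To pin down $H$ itself, take two independent realizations $a_{1},a_{2}\models p$: the quotient $a_{1}^{-1}a_{2}$ is generic in $H$, its canonical base is interalgebraic with the canonical parameter $\lceil H\rceil$, and $1$-basedness places $\lceil H\rceil$ in $acl^{eq}(a_{1})\cap acl^{eq}(a_{2})$. Since $a_{1}$ and $a_{2}$ are independent realizations of the same stationary type, a second application of $1$-basedness (via symmetry of forking) collapses this intersection to $acl^{eq}(\emptyset)$; hence $H$ is almost $\emptyset$-definable, and the Boolean-combination description of $X$ follows.

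For the abelian-by-finite clause, I would use the conjugation action. For a generic $g\in G$, the graph $\Gamma_{g}:=\{(x,g^{-1}xg):x\in G\}$ is a definable subgroup of $G\times G$, hence by the first part a Boolean combination of cosets of almost $\emptyset$-definable subgroups of $G\times G$. There are only boundedly many such subgroups, so conjugation by a generic $g$ must agree with a fixed automorphism on a subgroup of bounded index, forcing $C_{G}(g)$ to have finite index; applied to a generic element this produces an abelian subgroup of finite index in $G$.

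The main obstacle is forcing $\lceil H\rceil$ into $acl^{eq}(\emptyset)$ rather than merely into $acl^{eq}(a)$. This is precisely where $1$-basedness is used in a non-trivial way, through the careful choice of two independent realizations of $p$ together with symmetry of forking, and it is what prevents any naive adaptation of the argument to the weaker notion of $n$-ampleness studied in the rest of this note.
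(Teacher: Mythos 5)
The paper offers no proof of this statement: it is quoted as background from Hrushovski--Pillay \cite{WeakNormal}, so your sketch must stand on its own as a reconstruction of that classical argument. Its architecture (reduce to complete types, analyse $\Stab(p)$ and $\mathrm{Cb}(p)$, then the graph-of-conjugation argument for the abelian-by-finite clause) is indeed the standard one, but two of your steps are genuinely wrong as written. First, you attribute to ``standard stable group theory (Poizat)'' the assertion that $p$ is the generic type of the coset $a\Stab(p)$ and that $\mathrm{Cb}(p)$ is interdefinable with the canonical parameter of that coset. This is false in a general stable group and is precisely the heart of the theorem: in $(\C^{2},+)$ inside an algebraically closed field, the generic type of the curve $y=x^{2}$ has trivial stabilizer and is certainly not the generic of a coset of the trivial group. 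That every strong type is a translate of the generic of its stabilizer is a \emph{consequence} of $1$-basedness, obtained by applying $\mathrm{Cb}(\mathrm{stp}(c/M,g))\subseteq acl^{eq}(c)$ to a generic translate $c=ga$ of a realization $a\models p$; by folding this into ``standard theory'' you have assumed the main point.

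Second, the collapse of $\lceil H\rceil$ into $acl^{eq}(\emptyset)$ does not work as you describe it. Two independent realizations $a_{1},a_{2}$ of $p\in S(M)$ are independent over $M$, not over $\emptyset$ (each of them ``sees'' $\mathrm{Cb}(p)$), so the intersection $acl^{eq}(a_{1})\cap acl^{eq}(a_{2})$ is only forced into $acl^{eq}(M)$, which is vacuous. The correct device is again a generic translate: one produces an element $c$ with $\lceil H\rceil\in acl^{eq}(c)$ and with $c$ generic in $G$ over $\lceil H\rceil$, so that $c$ is independent from $\lceil H\rceil$ over $\emptyset$ and hence $\lceil H\rceil\in acl^{eq}(\emptyset)$. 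Two smaller points: stable groups satisfy DCC on type-definable subgroups only up to the superstable case, and the passage from ``every type is a coset type'' to ``every definable set is a Boolean combination of cosets'' goes through the finite equivalence relation theorem and compactness rather than a chain condition; and in your last paragraph, knowing that $C_{G}(g)$ has (uniformly bounded) finite index for all $g$ in a finite-index subgroup still requires B.~H.~Neumann's lemma to extract an abelian (or at least centre-by-finite) subgroup of finite index.
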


On the other hand, Hrushovski's seminal work in refuting Zilber's trichotomy conjecture (see \cite{HrushovskiSMSet})  
produced ``new'' strongly minimal sets that had an interesting property. Hrushovski isolated this property and 
called it $CM$-triviality (for Cohen-Macaulay). 
A stable theory $T$ is {\em $CM$-trivial} if there are no $a, b, c$ such that 
$a$ forks with $c$ over $\emptyset$, $a$ is independent from $c$ over $b$, $acl^{eq}(a)\cap acl^{eq}(b)=acl^{eq}(\emptyset)$ 
and finally $acl^{eq}(a,b)\cap acl^{eq}(a,c)=acl^{eq}(a)$. A kind of an analogue to the moreover statement of 
the above theorem has been proved by Pillay in \cite{PillFMR}.  

\begin{thm} 
A $CM$-trivial group of finite Morley rank is nilpotent-by-finite.
\end{thm}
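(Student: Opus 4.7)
The plan is to proceed by induction on Morley rank, exploiting the fact that CM-triviality descends to interpretable sections of the structure. Since the connected component $G^\circ$ has finite index in $G$, and $G$ is nilpotent-by-finite exactly when $G^\circ$ is nilpotent, I first reduce to the case where $G$ is connected and aim to show that it is nilpotent.

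For the induction step I split on the center $Z(G)$. If $Z(G)$ is infinite, then $Z(G)^\circ$ is an infinite definable central connected subgroup, so the quotient $G/Z(G)^\circ$ is a connected CM-trivial group of strictly smaller Morley rank, hence nilpotent by the induction hypothesis. Since any group whose quotient by a central subgroup is nilpotent is itself nilpotent, $G$ is nilpotent and we are done in this case.

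The hard case is $Z(G)$ finite: passing to the definable quotient $G/Z(G)$, which remains CM-trivial of finite Morley rank, I may assume $Z(G)=1$, and I must derive a contradiction from the assumption that $G$ is non-abelian. The strategy is to extract from the faithful conjugation action of $G$ on itself an explicit configuration violating CM-triviality. Concretely, picking $g$ generic over a generic $h$ and considering the triple $(h, h^g, g)$ together with suitable algebraic closures, one tries to witness that the equality $\mathrm{acl}^{eq}(a,b) \cap \mathrm{acl}^{eq}(a,c) = \mathrm{acl}^{eq}(a)$ must fail for an appropriate choice. Equivalently, in the spirit of the Cherlin--Zilber programme, one appeals to the fact that a connected centerless non-solvable group of finite Morley rank interprets an infinite field, and a definable field immediately produces a CM-triviality-violating configuration: taking $a, c$ independent generics in $F$ and setting $b = a \cdot c$ gives $b \in \mathrm{acl}(a,c)$ with $c \in \mathrm{acl}(a,b)$, forcing $\mathrm{acl}(a,b) \cap \mathrm{acl}(a,c)$ strictly larger than $\mathrm{acl}(a)$.

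The main obstacle is precisely this centerless case: one needs either the non-trivial field-interpretation theorem for connected centerless non-solvable groups of finite Morley rank, or a direct combinatorial argument that the conjugation action already produces a forbidden CM-configuration. Both the initial reduction to $G$ connected and the reduction via $Z(G)$ are comparatively routine once one knows that CM-triviality is preserved under definable subgroups and quotients.
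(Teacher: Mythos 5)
The paper does not prove this theorem; it is quoted as background from Pillay \cite{PillFMR}, so there is no in-paper argument to compare against. Judged on its own, your outline is sound in its routine parts (reduction to $G$ connected; induction on rank via $G/Z(G)^{\circ}$ when $Z(G)$ is infinite; passing to $G/Z(G)$ when $Z(G)$ is finite, which is centreless by the standard fact that a connected group of finite Morley rank with finite centre has centreless central quotient; and the preservation of CM-triviality under interpretable sections). But both pivots of what you correctly identify as the hard case are broken. The assertion that a connected centreless non-solvable group of finite Morley rank interprets an infinite field is not a known theorem: Zilber's field theorem only yields a field from a connected solvable non-nilpotent definable section, so if every Borel subgroup of $G$ is nilpotent — the (hypothetical) bad group case — you get no field, and the existence or field-interpretation of bad groups is open. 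Pillay's actual proof must therefore split: when some Borel is non-nilpotent, Zilber gives a field; when all Borels are nilpotent, he uses the Borovik--Nesin structure theory of bad groups (conjugate, self-normalizing, pairwise trivially intersecting Borels generically covering $G$) to manufacture a $2$-ample configuration directly from cosets of Borels. Your sketch omits this branch entirely, and it cannot be replaced by an appeal to field interpretation.

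Second, even granted a field $F$, your witnessing triple witnesses nothing. Taking $a,c$ independent generics and $b=ac$, condition (1) of Definition \ref{Ample} already fails, since $a_0$ must \emph{fork} with $a_n$ over $\emptyset$; and showing $acl^{eq}(a,b)\cap acl^{eq}(a,c)\supsetneq acl^{eq}(a)$ establishes the \emph{negation} of condition (4), i.e.\ it proves that this particular triple is \emph{not} a witness of $2$-ampleness rather than that the theory is $2$-ample. To see that an infinite field is not CM-trivial one must exhibit a triple satisfying all four conditions simultaneously; the standard choice is an incidence configuration (a generic flag point--line--plane, or equivalently the composite of two generic affine maps $x\mapsto sx+t$), and the genuine content there is verifying that $acl^{eq}(a,b)\cap acl^{eq}(a,c)$ equals exactly $acl^{eq}(a)$ while $a$ still forks with $c$ over $\emptyset$. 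As written, your argument would not go through even for $G=(K,+,\cdot)$ an algebraically closed field, which is the motivating non-example.
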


Pillay first realized the pattern and proposed an hierarchy of ampleness, non $1$-basedness ($1$-ampleness) and non $CM$-triviality ($2$-ampleness) 
being the first two items in it (see \cite{PiAmp}). His definition needed some fine ``tuning'' as observed by Evans \cite{EvAmp}. 

\begin{defi}[\cite{EvAmp}]\label{Ample}
Let $T$ be a stable theory and $n\geq1$. Then $T$ is $n$-ample if (after possibly adding some parameters) there are 
$a_0,a_1,\ldots,a_n$ such that:
\begin{enumerate}
\item $a_0$ forks with $a_n$ over $\emptyset$;
\item $a_{i+1}$ does not fork with $a_0,\ldots,a_{i-1}$ over $a_i$, for $1\leq i<n$;
\item $acl^{eq}(a_0)\cap acl^{eq}(a_1)=acl^{eq}(\emptyset)$;
\item $acl^{eq}(a_0,\ldots,a_{i-1},a_i)\cap acl^{eq}(a_0,\ldots,a_{i-1},a_{i+1})=acl^{eq}(a_0,\ldots,a_{i-1})$, for $1\leq i<n$.
\end{enumerate}

\end{defi}

The purpose of this paper is to give an alternative sequence to the one given in \cite{AmpleOK} witnessing $n$-ampleness, for any $n<\omega$, 
in the theory of non abelian free groups. The advantage of our sequence is that it is much simpler and consists only of 
primitive elements (instead of triples of elements). 
Though the witnessing sequence is the only point that we diverge from \cite{AmpleOK} we try to make the paper self-contained.

The paper is organized as follows.
In section \ref{2}, we give some background around the main geometric tool, i.e. $JSJ$-decompositions. 

In section \ref{3} we analyze the three main ingredients of the proof, i.e. the geometric elimination of imaginaries \cite{SelaImaginaries}, 
the understanding of algebraic closure \cite{AlgOV,AmpleOK}, and finally the understanding of forking independence 
for primitive elements \cite{PillayForking,PillayGenericity}, in non abelian free groups.

In the final section we give explicitly a sequence witnessing $n$-ampleness for any $n<\omega$.

Our notation is standard. By $\F_n$ we denote the free group of rank $n$, and by $T_{fg}$ the common theory of non abelian free groups. 
If $\mathbb{M}$ is a ``big'' saturated model of 
a first order theory $T$ and $A\subset \mathbb{M}$, then by $acl_{\mathbb{M}}(A)=acl(A)$ we mean the 
``real'' algebraic closure, while by $acl^{eq}_{\mathbb{M}}(A)=acl^{eq}(A)$ we denote the algebraic closure computed in $T^{eq}$.

\section{JSJ decompositions}\label{2}
In this section we will describe the notion of $JSJ$-decompositions. Roughly speaking, a $JSJ$-decomposition of 
a group $G$ is a splitting as a graph of groups in which one can ``read'' all possible splittings 
of $G$ over a given class, $\mathcal{A}$, of subgroups, i.e. splittings of $G$ where all edge groups belong 
to the class $\mathcal{A}$. Note that it is not immediate that such a splitting exists (provided $\mathcal{A}$ is given). 
Actually the existence of (cyclic) $JSJ$-decompositions 
for hyperbolic groups \cite{SelaHypII} by Sela and later for finitely presented groups \cite{RipsSelaJSJ} by Rips-Sela 
was a major breakthrough in group theory. After that, various results appeared mainly 
extending the class of subgroups $\mathcal{A}$, the more general being Fujiwara-Papazoglu \cite{FujiwaraPapasoglu}, 
extending Rips-Sela by taking $\mathcal{A}$ to be the class of slender subgroups, i.e. groups for which all subgroups are finitely generated.

We give a more formal account of the $JSJ$ theory following the unifying framework of Guirardel-Levitt (see \cite{GuirardelLevittJSJI},
\cite{GuirardelLevittJSJII}). We note that we will change our point of view from group splittings to 
groups acting on trees and the other way around (using the duality explained by Bass-Serre theory \cite{SerreTrees}) 
when it is convenient, but it will always be clear what we mean from the context.

Let $G$ be a group acting on a tree $T$ (by automorphisms and without inversions), we call $T$ a {\em cyclic $G$-tree} 
if all edge stabilizers are cyclic. If $H$ is a subgroup of $G$, then $H$ is {\em elliptic} in $T$ if it 
fixes a point in $T$, otherwise it is called {\em hyperbolic}. 
We fix a group $G$ and we work in the class, $\mathcal{T}_G$, of all cyclic $G$-trees. 
A tree in $\mathcal{T}_G$ is {\em universally elliptic} if its edge stabilizers are elliptic 
in every tree in $\mathcal{T}_G$. If $T_1,T_2$ are two trees in $\mathcal{T}_G$, 
we say that $T_1$ {\em dominates} $T_2$ if every subgroup of $G$ which is elliptic in $T_1$, is elliptic in $T_2$. 
 
A {\em cyclic $JSJ$-tree} is a universally elliptic tree which dominates 
any universally elliptic tree. We will be also interested in relative cyclic $JSJ$-trees of a group $G$ 
with respect to a family of subgroups $\mathcal{H}$. In this case we work in the class of all cyclic $G$-trees in 
which every $H\in\mathcal{H}$ is elliptic. Finally, by a {\em (relative) cyclic $JSJ$-decomposition} of a group $G$ we mean 
the corresponding graph of groups obtained by the action of $G$ on a (relative) cyclic $JSJ$-tree.
  
Let $G$ be a group and $\mathcal{H}$ be a family of subgroups of $G$, let $\mathcal{T}_{(G,\mathcal{H})}$ be the class of all cyclic $G$-trees 
in which every $H\in\mathcal{H}$ is elliptic. Let $T_{JSJ}$ be a relative cyclic $JSJ$-tree in $\mathcal{T}_{(G,\mathcal{H})}$. 
A vertex in $T_{JSJ}$ is called {\em rigid} if the vertex stabilizer is elliptic in any other tree in $\mathcal{T}_{(G,\mathcal{H})}$, 
and {\em flexible} if not. 

In this context the essential part of the $JSJ$-theory is the description of the flexible vertex stabilizers of a $JSJ$-tree 
(provided such a tree exists). We give the description in the case of torsion-free hyperbolic groups. 

We first recall that the fundamental group 
of a compact surface, $\Sigma$, with boundary is a free group. 
Each boundary component of $\Sigma$ has cyclic fundamental group, and gives rise in $\pi_1(\Sigma)$ to a 
conjugacy class of cyclic subgroups: we call these {\em maximal boundary subgroups}. 
A {\em boundary subgroup} of $\pi_1(\Sigma)$ is subgroup of a maximal boundary subgroup of $\pi_1(\Sigma)$. 

Now, let $G_u$ be a vertex stabilizer in a tree which is in $\mathcal{T}_{(G,\mathcal{H})}$. Then $G_u$ is 
{\em quadratically hanging} if it is the fundamental group of a surface $\Sigma$ with boundary, 
each incident edge group is a boundary subgroup and every conjugate of a group in $\mathcal{H}$ intersects $G_u$ in a boundary subgroup. 
In this case, we say that a boundary component $C$ of the surface $\Sigma$ is {\em used} if there exists an 
incident edge group or a subgroup of $G_{u}$ conjugate to some $H\in\mathcal{H}$ that is contained in $\pi_1(C)$ as a finite index subgroup.  

The following theorem is essentially due to Sela (non-relative case), but the relative case 
is contained in \cite[Theorem 8.20]{GuirardelLevittJSJI}.

\begin{thm}
Let $G$ be a torsion-free hyperbolic group freely indecomposable with respect to a subgroup $H$. 
Then a cyclic relative $JSJ$-decomposition exists. The flexible vertex groups 
are quadratically hanging and every component of the corresponding surfaces is used. 
\end{thm}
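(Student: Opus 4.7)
The plan is to derive the two assertions by specializing the general Guirardel–Levitt machinery to the torsion-free hyperbolic setting, and then invoking the classical Rips–Sela description of surface-type flexible subgroups. I would treat existence first, then analyze the flexible vertices, and finally show that every boundary component is used.

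First I would establish existence of a relative cyclic JSJ tree. The class I work in is $\mathcal{T}_{(G,\{H\})}$, the cyclic $G$-trees in which $H$ is elliptic. The abstract Guirardel–Levitt existence theorem requires two inputs: a chain condition on ascending sequences of universally elliptic refinements, and a compactness property on the space of deformations. Both are available for torsion-free hyperbolic groups: the chain condition follows from Bestvina–Feighn accessibility, while compactness follows from the standard limit argument for cyclic splittings using the hyperbolicity of $G$. The hypothesis that $G$ is freely indecomposable relative to $H$ prevents free splittings of $G$ in the class (since $H$ is elliptic), so the JSJ really does live in the cyclic world and no $\mathbb{Z}$-versus-free ambiguity arises.

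Next I would analyze a flexible vertex group $G_u$ of $T_{JSJ}$. By definition of flexibility, there is a tree $T'\in\mathcal{T}_{(G,\{H\})}$ in which $G_u$ is not elliptic, so $G_u$ acts on its minimal subtree $T'_u\subseteq T'$ with cyclic edge stabilizers and no global fixed point. Passing to a limiting $\mathbb{R}$-tree and applying the Rips machine, the action decomposes into axial, Levitt and surface-type components. The cyclic edge stabilizers together with the torsion-free hyperbolicity of $G$ (hence of $G_u$, as a subgroup of $G$ which itself is relatively quasiconvex) rule out axial and Levitt components, so $G_u$ must be the fundamental group of a compact surface $\Sigma$ with boundary. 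The universal ellipticity of the edge groups incident to $u$ in $T_{JSJ}$, combined with the ellipticity of every conjugate of $H$, forces these subgroups to meet $G_u$ in boundary subgroups of $\pi_1(\Sigma)$, which is exactly the quadratically hanging condition.

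Finally I would handle the ``used'' clause by contradiction. Suppose some boundary component $C$ of $\Sigma$ is unused: then neither an incident edge group nor a conjugate of $H$ sits in $\pi_1(C)$ as a finite-index subgroup. One can then perform a local surgery on $\Sigma$, namely cutting along a properly embedded arc landing twice on $C$, to produce a refinement of $T_{JSJ}$ along a new cyclic subgroup whose edge group is not elliptic in some other tree of the class. This contradicts the universal ellipticity of $T_{JSJ}$. The main obstacle, and the step I expect to be genuinely nontrivial, is the Rips-machine analysis in the second paragraph: one must verify that the three conditions (cyclic edge stabilizers, relative ellipticity of $H$, universal ellipticity of incident edges) transfer correctly to the limit $\mathbb{R}$-tree and that the resulting surface structure is compatible with the peripheral structure coming from $\mathcal{H}$ — this is precisely where the Guirardel–Levitt relative refinement of Sela's theorem does the work.
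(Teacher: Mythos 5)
First, note that the paper does not actually prove this theorem: it is quoted as a known result, essentially due to Sela in the non-relative case, with the relative version cited from Guirardel--Levitt (Theorem 8.20 of their first JSJ paper). So your proposal has to be measured against that literature rather than against an argument in the text. Your overall outline (existence via accessibility inside the Guirardel--Levitt framework, identification of flexible vertices via Rips theory) does follow the shape of that proof, and you rightly flag the Rips-machine analysis as the hard step. One caveat there: a single tree $T'$ in which $G_u$ fails to be elliptic only shows that $G_u$ splits over a cyclic group, which is very far from making $G_u$ a surface group; the QH conclusion requires analyzing the whole family of splittings in which $G_u$ is hyperbolic (Guirardel--Levitt's ``totally flexible'' vertex groups), together with quasiconvexity of vertex groups of cyclic splittings of hyperbolic groups in order to run the limiting argument inside $G_u$ at all.

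The concrete error is in your treatment of the ``used'' clause. Cutting $\Sigma$ along an essential properly embedded arc with both endpoints on an unused boundary component $C$ produces a splitting of $\pi_1(\Sigma)$ over the \emph{trivial} group, not over a new infinite cyclic subgroup; since $C$ is unused, no incident edge group and no trace of a conjugate of $H$ lies in $\pi_1(C)$ nontrivially, so all of these stay elliptic and one obtains a free splitting of $G$ relative to $H$. The trivial edge group is elliptic in every tree of the class, so this refinement does \emph{not} contradict universal ellipticity of $T_{JSJ}$ --- the contradiction you announce evaporates. What it does contradict is the hypothesis that $G$ is freely indecomposable with respect to $H$, and that is the hypothesis on which the ``used'' clause actually rests. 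The fact that your argument as written never invokes relative free indecomposability at this point is the symptom: the contradiction has been attached to the wrong property, and the step needs to be rerouted through Grushko-type free indecomposability rather than through the JSJ tree's universal ellipticity.
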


Note that in this case quadratically hanging vertex groups are also called vertex groups of {\em surface type}.

The following lemma will be useful in practice. 

\begin{lemma}\cite[Lemma 5.3]{GuirardelLevittJSJI}\label{JSJ}
Let $G$ be finitely generated. Let $T$ be a universally elliptic $G$-tree. Then $G$ has a $JSJ$-tree if and only if 
every vertex stabilizer $G_{u}$ of $T$ has a relative $JSJ$-tree with respect to $P_{u}$ 
(the set of incident edge stabilizers). Moreover, the $JSJ$-tree is obtained by 
refining $T$ using these trees.
\end{lemma}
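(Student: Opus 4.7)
The plan is to prove both directions by translating between cyclic $G$-trees and cyclic $G_u$-trees, using $T$ as the common skeleton. The key ingredients are two mutually inverse operations. Given $S\in\mathcal{T}_G$ and a vertex $u$ of $T$, let $Y_u(S)\subseteq S$ denote the minimal $G_u$-invariant subtree (a single fixed point, if $G_u$ is elliptic in $S$); conversely, given $Z\in\mathcal{T}_{(G_u,P_u)}$, let $T\star_u Z$ be the $G$-tree obtained by blowing up the $G$-orbit of $u$ in $T$ using $Z$, attaching each incident $T$-edge at a vertex of $Z$ containing its stabilizer (which exists because $P_u$ is elliptic in $Z$). The hypothesis that $T$ is universally elliptic ensures that $Y_u(S)\in\mathcal{T}_{(G_u,P_u)}$: each $H\in P_u$ is a stabilizer of an edge of $T$, hence universally elliptic, hence fixes a point in $S$, hence a point of $Y_u(S)$.

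For the ``if'' direction, suppose each $G_u$ admits a relative $JSJ$-tree $T_u\in\mathcal{T}_{(G_u,P_u)}$, and form $\hat T$ by simultaneously applying $\star_u$ at a set of orbit representatives of vertices of $T$. I claim $\hat T$ is a $JSJ$-tree for $G$. For universal ellipticity: an edge of $\hat T$ either descends from $T$ (universally elliptic by hypothesis) or lies in some $T_u$, in which case its stabilizer is universally elliptic in $\mathcal{T}_{(G_u,P_u)}$ and so, for any $S\in\mathcal{T}_G$, is elliptic in $Y_u(S)\subseteq S$. For domination: given a universally elliptic $S\in\mathcal{T}_G$ and a subgroup $H$ fixing a vertex $v$ of some $T_u\subseteq\hat T$, the edge stabilizers of $Y_u(S)$ come from $S$ and hence are universally elliptic in $\mathcal{T}_G$. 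Using the $\star$-operation, one checks that $Y_u(S)$ is then universally elliptic in $\mathcal{T}_{(G_u,P_u)}$, so $T_u$ dominates $Y_u(S)$, yielding an elliptic point for $H$ inside $S$.

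The ``only if'' direction reverses this correspondence. Given a $JSJ$-tree $\hat T$ for $G$, set $T_u:=Y_u(\hat T)$. Universal ellipticity in $\mathcal{T}_{(G_u,P_u)}$ follows because, for any $Z\in\mathcal{T}_{(G_u,P_u)}$, the edges of $T_u$ are edges of $\hat T$ and so are elliptic in $T\star_u Z\in\mathcal{T}_G$; since such edges are contained in $G_u$, this forces them to be elliptic in the $Z$-component. Domination of a universally elliptic $Z\in\mathcal{T}_{(G_u,P_u)}$ reduces similarly to domination of $T\star_u Z$ by $\hat T$. The ``moreover'' clause is immediate from the explicit construction of $\hat T$ in the ``if'' direction.

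The principal obstacle will be checking that $Y_u$ and $\star_u$ interact cleanly with universal ellipticity; this requires juggling three layers of the notion (ellipticity in $S$, in $Y_u(S)$, and in arbitrary relative trees $Z$) and relies at every step on the universal ellipticity of $T$, which is precisely what guarantees that the incident edge groups $P_u$ stay put under both operations. Once this compatibility is pinned down, the four verifications (universal ellipticity and domination, in each of the global and relative settings) translate back and forth mechanically from the definitions.
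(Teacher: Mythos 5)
The paper does not actually prove this lemma---it is quoted verbatim from Guirardel--Levitt---so there is no in-paper argument to compare against; your reconstruction via the two mutually inverse operations (restriction to the minimal $G_u$-invariant subtree $Y_u(S)$ and blow-up/refinement of $T$ along relative trees) is precisely the argument of \cite{GuirardelLevittJSJI}, and the four verifications go through as you outline. The only points left implicit that deserve a word are the existence of the minimal subtree $Y_u(S)$ when $G_u$ is hyperbolic in $S$ (which uses that $G_u$ is finitely generated, since $T$ is minimal hence cocompact with finitely generated edge stabilizers) and the equivariant nearest-point projection lemma guaranteeing that a subgroup of $G_u$ elliptic in an ambient tree fixes a point of any $G_u$-invariant subtree---the fact you invoke each time you pass from ellipticity in $S$ or in $T\star_u Z$ to ellipticity in the distinguished $G_u$-subtree.
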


\begin{rmk}\cite[Remark 5.4]{GuirardelLevittJSJI}\label{RelJSJ}
Lemma \ref{JSJ} is true in the relative case (where $\mathcal{H}$ is the family of ``fixed'' subgroups of $G$), 
provided that one adds to $P_{u}$ all subgroups of $G_u$ that are conjugate to some $H\in\mathcal{H}$.
\end{rmk}

We give some easy examples of relative cyclic $JSJ$-decompositions. 

\begin{ex} \label{Exa}
\begin{itemize}
 \item[(i)] The relative $JSJ$-decomposition of $\F_2=\langle e_1,e_2\rangle$ with respect to $\langle [e_1,e_2]\rangle$ is the 
 following: $\F_2*_{\langle [e_1,e_2]\rangle}\langle [e_1,e_2]\rangle$.
 \item[(ii)] The relative $JSJ$-decomposition of $\F_4=\langle e_1,e_2,e_3,e_4\rangle$ with respect to $\langle [e_1,e_2][e_3,e_4]\rangle$ is the 
 following: $\F_4*_{\langle [e_1,e_2][e_3,e_4]\rangle}\langle [e_1,e_2][e_3,e_4]\rangle$.
 \item[(iii)] More generally, the relative $JSJ$-decomposition of $\F_{2n}=\langle e_1,\ldots, e_{2n}\rangle$ with respect to 
 $\langle [e_1,e_2][e_3,e_4]\ldots[e_{2n-1},e_{2n}]\rangle$ is the following: $\F_{2n}*_{\langle [e_1,e_2][e_3,e_4]
 \ldots[e_{2n-1},e_{2n}]\rangle}\langle [e_1,e_2]$ $[e_3,e_4]\ldots[e_{2n-1},e_{2n}]\rangle$.
% \item[(iii)] More generally, the relative $JSJ$-decomposition of $\F_n=\langle e_1,\ldots,e_n\rangle$ 
 %with respect to $\langle \Sigma(e_1,\ldots,e_n)\rangle$, where $\langle \F_n | \Sigma(e_1,\ldots, e_n)\rangle$ is 
 %a surface group (of a closed surface of Euler characteristic at most $-2$) is the following: 
 %$\F_{n}*_{\langle \Sigma(e_1,\ldots, e_n)\rangle}\langle \Sigma(e_1,\ldots, e_n)\rangle$.
\end{itemize}
\end{ex}
%We only justify example $(i)$. It is immediate that the given splitting is universally elliptic as 
%the edge group lies in the ``fixed'' subgroup. By \cite[Lemma 3.10(2)]{GuirardelLevittJSJI} is enough to show that our splitting 
%dominates any other one-edge universally elliptic splitting. As $\langle [e_1,e_2]\rangle$  
%is trivially elliptic in any splitting relative to $\langle [e_1,e_2]\rangle$, 
%is enough to show that $\F_2$ is elliptic in any other one-edge universally elliptic 
%splitting relative to $\langle[e_1,e_2]\rangle$. We first note that a cyclic subgroup not conjugate to $\langle[e_1,e_2]\rangle$ is 
%hyperbolic either to $\langle e_1,[e_1,e_2]\rangle *_{\langle e_1\rangle}$ or to $\langle e_2,[e_2,e_1]\rangle *_{\langle e_2\rangle}$. 
%So, in any universally elliptic splitting the edge groups must be conjugates to $\langle [e_1,e_2]\rangle$. But the 
%fundamental group of the splitting is a free group, thus the edge group must be primitive in a vertex group. As 
%$[e_1,e_2]$ is not primitive in $\F_2$, the splitting cannot be an $HNN$-extension. Therefore it is an 
%amalgamated free product and it could only be $\F_2*_{\langle [e_1,e_2]\rangle}\langle [e_1,e_2]\rangle$. 
The following pictures give the intuition behind the first two above mentioned examples.

\begin{figure}[ht!]
\hfill
\begin{minipage}[t]{.4\textwidth}
\centering
\includegraphics[width=.3\textwidth]{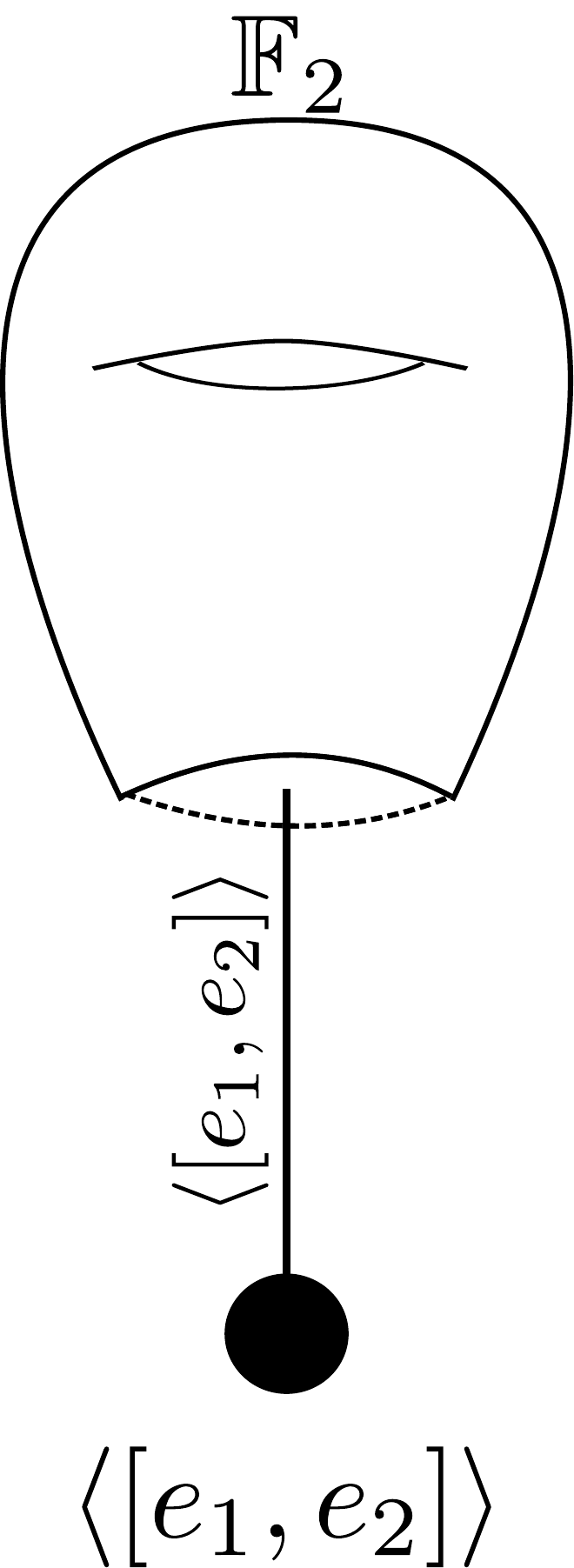}
\caption{The relative $JSJ$ of $\F_2$ with respect to $\langle [e_1,e_2]\rangle$}
\end{minipage}
\hfill
\begin{minipage}[t]{.4\textwidth}
\centering
\includegraphics[width=.35\textwidth]{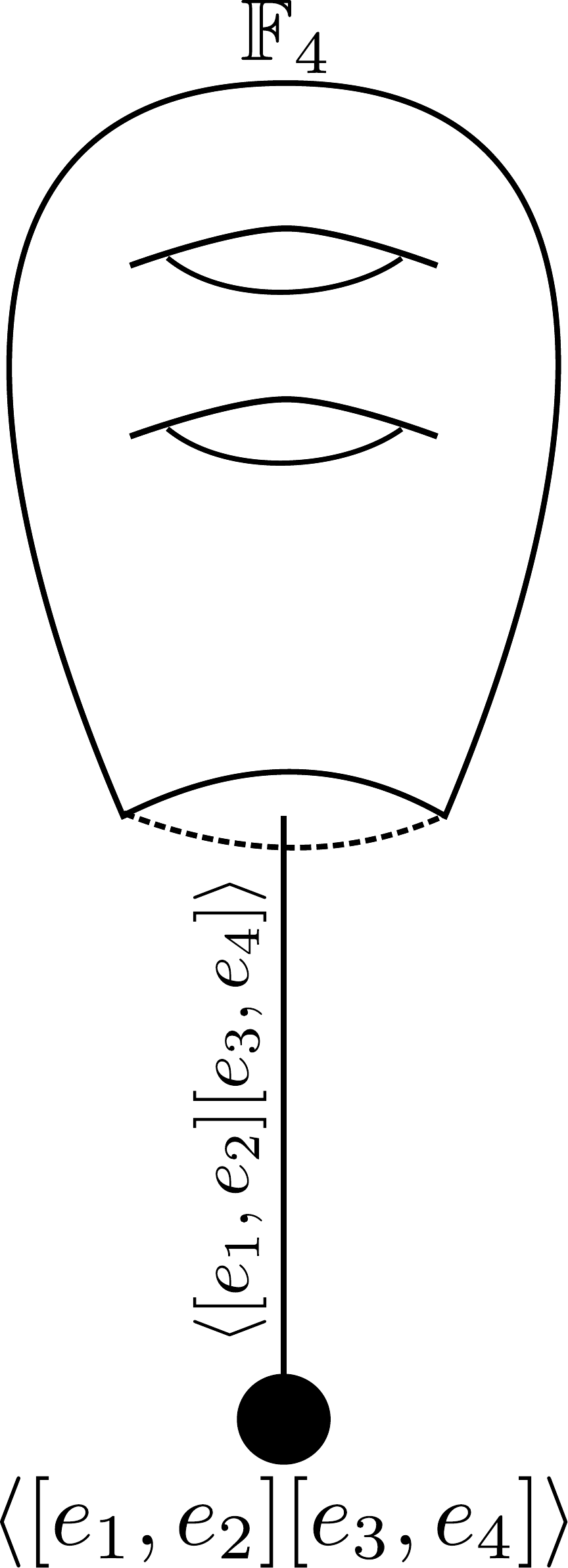}
\caption{The relative $JSJ$ of $\F_4$ with respect to $\langle [e_1,e_2][e_3,e_4]\rangle$}
\end{minipage}
\end{figure}

\section{Imaginaries, Algebraic closure, Forking}\label{3}
The main result that allowed the proof in \cite{AmpleOK} is the geometric elimination of imaginaries due to Sela \cite{SelaImaginaries}. 
Although in this paper we do not use it directly, we state it for completeness. We start by defining some ``tame'' families of imaginaries. 

\begin{defi}
Let $G$ be a torsion-free hyperbolic group. The following equivalence relations in $G$ are called basic.
\begin{itemize}
 \item[(i)] $E_1(a,b)$ if and only if there is $g\in G$ such that $a^g=b$. (conjugation)
 \item[$(ii)_m$] $E_{2_m}((a_1,b_1),(a_2,b_2))$ if and only if $b_1,b_2\neq 1$ and $C_{G}(b_1)=C_{G}(b_2)=\langle b \rangle$ and
$a_1^{-1}a_2\in\langle b^m \rangle$. ($m$-left-coset)
 \item[$(iii)_m$] $E_{3_m}((a_1,b_1),(a_2,b_2))$ if and only if $b_1,b_2\neq 1$ and $C_{G}(b_1)=C_{G}(b_2)=\langle b \rangle$ and
$a_1a_2^{-1}\in\langle b^m \rangle$. ($m$-right-coset)
 \item[$(iv)_{m,n}$] $E_{4_{m,n}}((a_1,b_1,c_1),(a_2,b_2,c_2))$ if and only if
$a_1,a_2,c_1,c_2\neq 1$ and $C_{G}(a_1)=C_{G}(a_2)=\langle a \rangle$ and $C_{G}(c_1)=C_{G}(c_2)=\langle c \rangle$
  and there is $\gamma\in \langle a^n \rangle$ and $\epsilon\in \langle c^n \rangle$ such that $\gamma b_1 \epsilon=b_2$. ($m,n$-double-coset)
\end{itemize}

\end{defi}

Sela proved geometric elimination of imaginaries up to the basic sorts (see \cite{SelaImaginaries}).

\begin{thm}\label{Elim}
Let $G$ be a torsion-free hyperbolic group. Let $E(\bar{x},\bar{y})$ be a $\emptyset$-definable equivalence relation in $G$, with $\abs{\bar{x}}=m$.
Then there exist $k,l<\omega$ and a $\emptyset$-definable relation:
$$R_E \subseteq G^m \times G^k \times S_1(G) \times \ldots \times S_l(G)$$
such that:
\begin{itemize}
 \item[(i)] each $S_i(G)$ is one of the basic sorts;
 \item[(ii)] for each $\bar{a}\in G^m$ , $\abs{R_E(\bar{a},\bar{z})}$ is uniformly bounded (i.e. the bound does not depend on $\bar{a}$);
 \item[(iii)] $R_E(\bar{a},\bar{z})\leftrightarrow R_E(\bar{b},\bar{z})$ if and only if $E(\bar{a},\bar{b})$.
\end{itemize}
\end{thm}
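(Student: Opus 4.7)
The plan is to attach to each equivalence class a canonical geometric object --- essentially a conjugacy class of a graded limit group together with some discrete combinatorial data --- and to show that this object is itself coded by a uniformly bounded tuple living in the basic sorts.

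First I would fix $\bar a\in G^m$ and study the fiber $C_{\bar a}:=\{\bar y : E(\bar a, \bar y)\}$. By Sela's analysis of definable sets over a torsion-free hyperbolic group, the family of homomorphisms $\F_{\abs{\bar y}}\to G$ whose images lie in $C_{\bar a}$ factors, via a Makanin-Razborov type diagram relative to the parameters $\bar a$, through finitely many maximal graded limit quotients $L_1,\dots,L_k$. Two tuples $\bar a$ and $\bar a'$ with $E(\bar a,\bar a')$ yield the same list of limit groups, up to canonical identification, so the equivalence class is captured by this list together with the concrete way $\bar a$ sits inside each $L_i$.

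Next I would exploit the canonical cyclic $JSJ$-decomposition of each $L_i$, in the spirit of the theorems recalled in Section \ref{2}. The isomorphism type of $L_i$ equipped with its $JSJ$ is discrete data, parametrized by finitely many choices, and can be recorded in the $G^k$-factor of the target of $R_E$. The only remaining information is the position of $\bar a$ in $L_i$ modulo the modular group generated by Dehn twists along the edges of the $JSJ$ and by mapping class group elements acting on surface-type vertex groups. This modular orbit is exactly what the four families of basic sorts were designed to encode: rigid vertex groups are pinned down only up to conjugation, giving the sort $E_1$; a Dehn twist along a cyclic edge with generator $b$ moves an adjacent element by a power of $b$ on one side, producing a coset of $\langle b^m\rangle$ tracked by $E_{2_m}$ or $E_{3_m}$; and when a surface element is simultaneously acted on by two adjacent edges with generators $a,c$, the orbit is a double coset of the form $\langle a^n\rangle\, b\, \langle c^n\rangle$, which is precisely the datum recorded by $E_{4_{m,n}}$.

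Finally I would verify the three stated properties of $R_E$. Property (i) holds by construction. Property (iii), that $R_E(\bar a,\bar z)$ and $R_E(\bar b,\bar z)$ define the same set if and only if $E(\bar a,\bar b)$, will follow because the list of limit quotients together with the rigid-conjugacy and edge-coset data recovers exactly the modular orbit of the tuple, which coincides with its $E$-class. The hard part will be the uniform bound in (ii): one must show that as $\bar a$ varies, the canonical $JSJ$-decompositions of the associated $L_i$ do not grow unboundedly in complexity (number of vertices and edges, indices of edge groups in surface boundary components), so that a fixed $l$ of basic-sort coordinates always suffices. This uniform finiteness is the combinatorial core of Sela's imaginaries work, resting on the shortening argument and the descending chain condition for limit groups; the remainder is bookkeeping, packaging the rigid-vertex conjugacy classes and edge-coset data into a single definable relation $R_E$.
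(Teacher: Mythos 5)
The paper does not actually prove this theorem: it is quoted from Sela's work on imaginaries (\cite{SelaImaginaries}), stated ``for completeness,'' and never used directly in the rest of the note, so there is no in-paper argument to compare yours against. Your sketch does follow the broad outline of Sela's strategy (Makanin--Razborov analysis of the fibers relative to the parameter, JSJ decompositions of the resulting limit quotients, modular groups accounting for the conjugacy, coset and double-coset sorts), but as a proof it has genuine gaps at exactly the points where the real work lies.

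Concretely: (1) you assert that equivalent parameters $\bar a,\bar a'$ ``yield the same list of limit groups, up to canonical identification''; this is not automatic --- the uniformity of the diagram as the parameter varies is the content of the graded/parametric theory, and the canonicity of the identification is precisely what must be proved. (2) Your argument for (iii) identifies the $E$-class of $\bar a$ with a modular orbit inside the limit quotients. For an arbitrary $\emptyset$-definable equivalence relation $E$ there is no reason its classes should be modular orbits; what is actually shown is that distinct classes can be \emph{separated} by boundedly many basic-sort elements extracted from the diagram, and this requires the full quantifier-elimination machinery over hyperbolic groups, not just the JSJ of a single fiber. (3) You flag the uniform bound in (ii) as ``the hard part'' and then defer it to ``the combinatorial core of Sela's imaginaries work,'' which is to cite the theorem you are trying to prove; likewise, the claim that the isomorphism type of each $L_i$ with its JSJ ``can be recorded in the $G^k$-factor'' needs an explicit definable encoding that you do not supply. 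In short, the proposal is a correct map of the territory but not a proof: every step that distinguishes this statement from a routine one is left open, and for the purposes of this paper the theorem should simply be treated as an external citation.
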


The following theorem of Ould Houcine-Vallino gives an understanding of the algebraic closure in free groups with respect to $JSJ$-decompositions
(see \cite{AlgOV}). 
We recall that given an abelian splitting of $G$, i.e. a splitting in which all edge groups are abelian, 
then the {\em elliptic abelian neighborhood} of a vertex group in this splitting 
is the subgroup generated by the elliptic elements that commute with nontrivial
elements of the vertex group.

\begin{thm}\label{AlgClos}
Let $A$ be a non abelian subgroup of $\F_n$, such that $\F_n$ is freely indecomposable with respect to $A$. Then $acl(A)$ 
coincides with the elliptic abelian neighborhood of the vertex group containing $A$ in the cyclic 
$JSJ$-decomposition of $\F_n$ relative to $A$.
\end{thm}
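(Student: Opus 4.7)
The plan is to prove both inclusions. Write $V$ for the vertex group of the relative cyclic $JSJ$ of $\F_n$ (with respect to $A$) containing $A$, and $N$ for its elliptic abelian neighborhood. Note $V\subseteq N$ trivially; the elements of $N\setminus V$ lie, by definition, in centralizers of non-trivial elements of $V$, and since centralizers in $\F_n$ are cyclic, each such centralizer is a maximal cyclic subgroup $\langle c\rangle$ meeting $V$ non-trivially.

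For the inclusion $N\subseteq \mathrm{acl}(A)$ the main step is to show $V\subseteq \mathrm{acl}(A)$: granted this, if $1\ne v\in V\subseteq\mathrm{acl}(A)$ and $\langle c\rangle = C_{\F_n}(v)$, then $c$ is algebraic over $v$ (it has at most two images under any automorphism fixing $v$, namely $c$ and $c^{-1}$), whence $\langle c\rangle\subseteq\mathrm{acl}(A)$ and therefore $N\subseteq\mathrm{acl}(A)$. To establish $V\subseteq\mathrm{acl}(A)$, I would use the canonicity of the relative $JSJ$ up to conjugation in its deformation space: any automorphism of $\F_n$ fixing $A$ pointwise induces a simplicial automorphism of the Bass-Serre tree that fixes the vertex stabilised by $A$, sending $V$ to one of finitely many conjugates; because $A\subseteq V$ is non-abelian and $\F_n$ is freely indecomposable relative to $A$, the orbit of any $v\in V$ under $\Aut(\F_n/A)$ is finite.

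For the reverse inclusion $\mathrm{acl}(A)\subseteq N$, I would take $g\in\F_n\setminus N$ and construct infinitely many images of $g$ under $\Aut(\F_n/A)$ using the modular automorphism group of the relative $JSJ$, generated by Dehn twists along edges and by automorphisms of the flexible (surface type) vertex groups distinct from $V$. If $g$ is hyperbolic in the Bass-Serre tree, its axis crosses some edge $e$, and iterating a Dehn twist $\tau_e$ chosen to fix pointwise the component containing $V$ (hence $A$) yields pairwise distinct $\tau_e^k(g)$. If $g$ is elliptic in a vertex $V'\ne V$, a Dehn twist along an edge of the geodesic from $V$ to $V'$ plays the same role; the case $g\in V$ is vacuous since $V\subseteq N$. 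Finally, by the model-theoretic homogeneity of $\F_n$ over the finitely generated non-abelian subgroup $A$, each such automorphism preserves $\tp(g/A)$, so the infinitely many distinct images witness $g\notin\mathrm{acl}(A)$ via the elementarity $\F_n\preceq\mathbb{M}$.

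The hard part is the finiteness of $\Aut(\F_n/A)$-orbits on $V$ used above: one must rule out modular automorphisms of $V$ (relative to the incident edge groups) that fix $A$ pointwise and have infinite order. This is where the freely indecomposable hypothesis and the fine structure of $V$ (rigid versus surface type, and the embedding of $A$) are crucial, and it is the technical core of the proof, relying on the description of modular automorphisms of relative $JSJ$ decompositions in torsion-free hyperbolic groups together with homogeneity of $\F_n$.
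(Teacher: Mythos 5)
First, note that the paper does not prove this statement at all: it is quoted as a theorem of Ould Houcine--Vallino from \cite{AlgOV}, so there is no proof in the text to compare yours against; you can only be judged against the strategy of that reference. Your outline follows the natural two-sided plan (which is broadly also theirs): modular automorphisms of the relative $JSJ$ to push everything outside the elliptic abelian neighborhood $N$ out of $acl(A)$, and a rigidity statement to pull the vertex group $V$ into $acl(A)$. The direction $acl(A)\subseteq N$ is essentially sound as you describe it, except that you invoke homogeneity in the wrong direction: automorphisms of $\F_n$ fixing $A$ pointwise automatically preserve $\tp(g/A)$, so no homogeneity is needed there. What does need checking is that every $g\notin N$ is genuinely moved --- for instance an elliptic $g$ in a vertex $V'\ne V$ that centralizes the edge group you chose to twist along; one must then twist along the edge incident to $V$, where commuting with its edge group would already place $g$ in $N$, so the case analysis closes, but it has to be carried out.

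The genuine gap is in the inclusion $V\subseteq acl(A)$. Even granting the finiteness claim you leave unproven, the inference from ``the orbit of $v$ under $\Aut(\F_n/A)$ is finite'' to ``$v\in acl(A)$'' is invalid: $\F_n$ is not saturated, so a finite orbit under the automorphism group of this particular model is only a \emph{necessary} condition for algebraicity. Algebraicity requires a formula over $A$ with finitely many solutions in the monster model; a type realized only finitely often in $\F_n$ can perfectly well be non-algebraic, with its other realizations living in a proper elementary extension. Homogeneity of $\F_n$ over $A$ identifies the orbit with the set of realizations of $\tp(v/A)$ inside $\F_n$, but says nothing about realizations outside it, so it cannot repair this step. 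The core of the argument --- producing an actual algebraic formula over $A$ whose solution set captures the rigid vertex group, which in \cite{AlgOV} requires a careful analysis of the relative $JSJ$ and of the homomorphisms (not just automorphisms) compatible with $A$ --- is therefore missing; you have flagged only half of the difficulty (finiteness of the orbit) as the technical core, when the definability half is equally essential.
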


For the purpose of this paper the following theorems of Pillay concerning forking independence and genericity are enough.
 
\begin{thm}[Corollary 2.7(ii)\cite{PillayForking}]\label{ForkPil}
Any basis of $\F_n$ is an independent set.
\end{thm}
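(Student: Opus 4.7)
My plan is to prove the statement by induction on $n$, exploiting the free-product decomposition that a basis affords together with the fact that primitive elements all realize a single complete type over $\emptyset$.

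The base case $n=1$ is trivial: a single element is automatically independent over $\emptyset$. For the inductive step, let $e_1,\ldots,e_n$ be a basis of $\F_n$. By symmetry of non-forking in the stable theory $T_{fg}$, it suffices to verify two things: that $\{e_1,\ldots,e_{n-1}\}$ is independent over $\emptyset$, and that $e_n$ is independent from $\{e_1,\ldots,e_{n-1}\}$ over $\emptyset$. The first is the inductive hypothesis applied to the basis $e_1,\ldots,e_{n-1}$ of $\F_{n-1}=\langle e_1,\ldots,e_{n-1}\rangle$, once one notes that independence inside $\F_{n-1}$ coincides with independence in $\F_n$ via Sela's elementary embedding $\F_{n-1}\preceq\F_n$.

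For the second assertion I would use two ingredients. First, the action of $\Aut(\F_n)$ on primitives is transitive, and since non-abelian free groups are homogeneous models of $T_{fg}$, any two primitive elements realize the same $1$-type over $\emptyset$, namely the unique generic type $p_0 \in S_1(\emptyset)$. Second, in the visible free splitting $\F_n=\langle e_1,\ldots,e_{n-1}\rangle *\langle e_n\rangle$ the element $e_n$ is ``as generic as possible'' over the parameters in $\F_{n-1}$, so that $\tp(e_n/\{e_1,\ldots,e_{n-1}\})$ is the non-forking extension of $p_0$; by stationarity (coming from connectedness of $\F_n$ as a type-definable stable group), this yields the desired independence.

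The main obstacle lies in the identification above: to show that $\tp(e_n/\{e_1,\ldots,e_{n-1}\})$ really is the non-forking extension of $p_0$. This is not an algebraic manipulation but uses the stability of $T_{fg}$ (Sela) together with Pillay's analysis of genericity in the free group from \cite{PillayForking}, namely that when the parameters sit inside a proper free factor, the complementary primitive realizes the generic type over them. Once this point is granted, the induction closes without further difficulty, and the desired conclusion that any basis of $\F_n$ is an independent set follows.
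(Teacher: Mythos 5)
First, a point of reference: the paper does not prove this statement at all. It is imported verbatim as Corollary 2.7(ii) of \cite{PillayForking} and used as a black box, so there is no internal argument to measure yours against; what follows compares your sketch with Pillay's original proof.

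Your outline correctly reproduces the architecture of that proof: induction on $n$, reduction (via symmetry and transitivity of nonforking, and via $\F_{n-1}\preceq\F_n$) to the single claim that $e_n$ does not fork with $\{e_1,\ldots,e_{n-1}\}$ over $\emptyset$, and the identification of the common type of primitives with the generic type. The genuine gap is that the step you yourself flag as ``the main obstacle'' is the entire content of the theorem. Asserting that ``when the parameters sit inside a proper free factor, the complementary primitive realizes the generic type over them'' and attributing it to \cite{PillayForking} is to cite the result you are meant to be proving: that assertion yields the independence of the basis immediately by your own induction, and is itself the substance of Pillay's Corollary 2.7. So, read as a self-contained argument, the proposal is circular at its pivotal point. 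To close it one must supply the stable-group-theoretic input: for any $g,h\in\F_{n-1}$ the map fixing $e_1,\ldots,e_{n-1}$ and sending $e_n\mapsto g\,e_n\,h$ is an automorphism of $\F_n$ fixing $\F_{n-1}$ pointwise, hence (since $\F_{n-1}\preceq\F_n\preceq\mathbb{M}$) $\tp(e_n/\F_{n-1})$ is invariant under translation by every element of the model $\F_{n-1}$; a type over a model whose stabilizer contains all points of that model has stabilizer the whole group (any definable set over the model containing the stabilizer must be everything), which forces the type to be generic; and a generic type of a stable group does not fork over $\emptyset$. A smaller point: your appeal to stationarity is a red herring --- to get independence you only need $\tp(e_n/\langle e_1,\ldots,e_{n-1}\rangle)$ to be \emph{a} nonforking extension of its restriction to $\emptyset$, not \emph{the} nonforking extension of the generic type.
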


\begin{thm}[Theorem 2.1(i)\cite{PillayGenericity}]\label{GenPil}
Suppose $a$ is a generic element in $\F_n$. Then $a$ is primitive.
\end{thm}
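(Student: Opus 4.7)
The plan is to combine uniqueness of the generic type in a connected stable group with the independence of bases given by Theorem~\ref{ForkPil}, and then transfer ``being generic'' to ``being primitive'' through the homogeneity of $\F_n$.

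First I would show that a basis element is generic. Fix a basis $e_1,\ldots,e_n$ of $\F_n$. By Theorem~\ref{ForkPil} this set is $\emptyset$-independent, so $e_1$ is independent from the subgroup generated by $e_2,\ldots,e_n$. Because these elements generate $\F_n$, any nontrivial forking of $\tp(e_1/\emptyset)$ over $\emptyset$ would contradict the independence of the basis: concretely, $\tp(e_1/\emptyset)$ attains the maximal $U$-rank in $S_1(\emptyset)$ and thus is generic in the stable group $\F_n$ in the sense that every translate is a non-forking extension.

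Next I would invoke connectedness of $\F_n$: the stable group $\F_n$ admits no proper $\emptyset$-definable subgroup of finite index (which can be read off from Pillay's setup in \cite{PillayGenericity}). Consequently the generic type over $\emptyset$ is unique, and any generic element $a$ must satisfy $\tp(a/\emptyset)=\tp(e_1/\emptyset)$. To extract primitivity from this type equality, I would appeal to the homogeneity of non-abelian free groups (Perin--Sklinos): any two elements of $\F_n$ realizing the same type over $\emptyset$ lie in a common $\Aut(\F_n)$-orbit. Since Nielsen's theorem says $\Aut(\F_n)$ acts transitively on primitive elements, there exists $\sigma\in\Aut(\F_n)$ with $\sigma(e_1)=a$, and hence $a$ is primitive.

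The main obstacle is precisely the last step: the passage from ``same type over $\emptyset$'' to ``same $\Aut(\F_n)$-orbit'' is the deep homogeneity theorem for free groups and is not a formal consequence of stability-theoretic considerations. All the other steps, once one accepts connectedness of $\F_n$, are soft applications of the general theory of generic types in connected stable groups. A cleaner variant of the argument, avoiding explicit use of homogeneity, would instead try to characterize the generic type by a definable condition equivalent to primitivity (for instance, via the JSJ/retract description of primitive elements of $\F_n$), but this still ultimately rests on the same body of Sela--Perin results.
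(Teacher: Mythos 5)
First, a point of orientation: the paper does not actually prove this statement --- it is imported verbatim from Pillay's paper \cite{PillayGenericity} and used as a black box, so there is no internal proof to compare against. Judged on its own terms, your argument has a reasonable overall shape (identify the unique generic type with the type of a basis element, then transfer the conclusion to an arbitrary generic), but the first step is justified incorrectly. The theory of non abelian free groups is stable but \emph{not} superstable, so the claim that $\tp(e_1/\emptyset)$ ``attains the maximal $U$-rank in $S_1(\emptyset)$'' is not meaningful here, and maximal $U$-rank is not how one certifies genericity in a merely stable group anyway. Likewise, ``nontrivial forking of $\tp(e_1/\emptyset)$ over $\emptyset$'' is vacuous: a type over $\emptyset$ never forks over $\emptyset$. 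Genericity requires that every translate $g\cdot p$ of (every extension of) the type is a non-forking extension. The correct way to extract this from Theorem~\ref{ForkPil} is Pillay's translate argument: working in $\F_{n+1}=\F_n*\langle e_{n+1}\rangle\succ\F_n$, for each $g\in\F_n$ the element $g e_{n+1}$ is again a member of a basis of $\F_{n+1}$, so by independence of bases $\tp(g e_{n+1}/\F_n)$ does not fork over $\emptyset$; hence $\tp(e_{n+1}/\F_n)$ is generic, and its restriction to $\emptyset$ is the type of a primitive element. That step must be spelled out; independence of a basis alone does not yield genericity.

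Second, your closing step rests on the homogeneity of $\F_n$ (Perin--Sklinos, Ould Houcine), which postdates the cited theorem and is a far deeper result than what it is being used to prove: Pillay's original argument necessarily does not go through homogeneity. There is no circularity in your route (the homogeneity proofs do not pass through genericity), so after repairing the first step your argument would be a correct, if anachronistic, derivation of the statement from a much stronger external input --- you rightly identify this reliance as the main obstacle yourself. You should also make explicit the connectedness input you are using, namely that every definable subgroup of $\F_n$ of finite index is the whole group (finite-index subgroups are non-abelian free, while proper definable subgroups are cyclic), since uniqueness of the generic type over $\emptyset$ is exactly what lets you conclude $\tp(a/\emptyset)=\tp(e_1/\emptyset)$.
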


For completeness we give the description of forking independence over free factors given by Perin-Sklinos \cite{PerSklFork}.

\begin{thm}\label{Fork}
Let $\bar{a},\bar{b}\in \F_n$ and $G$ be a free factor (possibly trivial) of $\F_n$. Then 
$\bar{a}$ does not fork with $\bar{b}$ over $G$ if and only if $\F_n$ admits a free decomposition $\F_n=\F*G*\F'$ and 
$\bar{a}\in \F*G$ and $\bar{b}\in G*\F'$.
\end{thm}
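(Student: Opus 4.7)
The statement is an ``if and only if'', and I would treat the two directions separately, with the reverse implication being routine and the forward one (non-forking implies free decomposition) the substantial step.

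\textbf{The easy direction.} Suppose $\F_n = \F * G * \F'$ with $\bar{a} \in \F*G$ and $\bar{b} \in G*\F'$. Pick bases $B_1$ of $\F$, $B_G$ of $G$ and $B_2$ of $\F'$, so that $B := B_1 \cup B_G \cup B_2$ is a basis of $\F_n$. By Theorem \ref{ForkPil}, $B$ is an independent set over $\emptyset$, hence $B_1$ is non-forking independent from $B_G \cup B_2$ over $\emptyset$. Standard forking calculus (base extension) then yields $B_1 \downarrow_{B_G} B_2$. Since $\bar{a}$ is a word on $B_1 \cup B_G$ and $\bar{b}$ is a word on $B_G \cup B_2$, and $B_G$ is interdefinable with $G$ as a set, monotonicity of non-forking gives $\bar a \downarrow_G \bar b$.

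\textbf{The hard direction.} Assume $\bar{a} \downarrow_G \bar{b}$. Because $G$ is a free factor of $\F_n$, write $\F_n = G * H$ and consider the Grushko decomposition of $\F_n$ relative to $\{\bar a, \bar b, G\}$: $\F_n = K_1 * \cdots * K_s * L$, with $L$ free and each $K_i$ freely indecomposable relative to the part of the parameters it carries. I would argue that if no free decomposition of $\F_n$ of the form claimed in the statement exists, then some $K_i$ contains parts of both $\bar a$ and $\bar b$ in an entangled way: more precisely, the cyclic relative $JSJ$ of $K_i$ has a vertex group which meets both $\bar a$ and $\bar b$ but is not contained in a conjugate of $G$. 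Applying Theorem \ref{AlgClos} to the elliptic abelian neighborhood of that vertex group then produces an element $c \in acl(\bar a G) \cap acl(\bar b G)$ with $c \notin acl(G)$. But in any stable theory the assumption $\bar a \downarrow_G \bar b$ forces $acl(\bar a G) \cap acl(\bar b G) = acl(G)$ (otherwise one obtains a $c$ with $\tp(c/G)$ non-algebraic while $\tp(c/\bar b G)$ is algebraic, contradicting $c \downarrow_G \bar b$). This contradiction would force the existence of a free decomposition as asserted, after which Grushko's theorem shows that $G$ can be placed as a free factor separating $\bar a$ from $\bar b$.

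\textbf{Main obstacle.} The substantial step is establishing the entanglement-versus-separation dichotomy for the relative Grushko/cyclic $JSJ$ of $\F_n$ with respect to $\{\bar a, \bar b, G\}$: namely that the only obstruction to writing $\F_n = \F * G * \F'$ with $\bar a \in \F * G$ and $\bar b \in G * \F'$ is a common flexible or rigid vertex group outside $G$ shared between $\bar a$ and $\bar b$. To carry this out one would invoke Lemma \ref{JSJ} and Remark \ref{RelJSJ} to refine the Grushko decomposition into a cyclic $JSJ$ relative to the parameters, use the explicit description of flexible vertex groups as surface-type, and track how the elliptic abelian neighborhood of Theorem \ref{AlgClos} behaves under passage between the $JSJ$ relative to $\{\bar a, G\}$, the one relative to $\{\bar b, G\}$ and the one relative to $\{\bar a, \bar b, G\}$. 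Once this bookkeeping is done, the free decomposition in the conclusion is read off directly from the Grushko factorization.
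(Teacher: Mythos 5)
First, a remark on scope: the paper does not prove Theorem~\ref{Fork} at all --- it is quoted ``for completeness'' from \cite{PerSklFork}, where it is the main theorem of a substantial paper, and it is not actually used in the ampleness argument. So there is no in-paper proof to compare against; I can only assess your attempt on its own terms. Your easy direction is fine: bases are independent sets by Theorem~\ref{ForkPil}, base monotonicity gives $B_1\downarrow_{B_G}B_2$, and monotonicity plus interalgebraicity of $G$ with $B_G$ finishes it.

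The hard direction, however, rests on a claim that is false, and the counterexample is the central example of this very paper. Your strategy is: if no separating free decomposition exists, produce a common algebraic element $c\in acl(\bar aG)\cap acl(\bar bG)\setminus acl(G)$ and contradict the standard fact that independence forces $acl^{eq}(\bar aG)\cap acl^{eq}(\bar bG)=acl^{eq}(G)$. That standard fact is correct, but the dichotomy ``either separated by a free splitting over $G$, or entangled in a common vertex group yielding a common algebraic element'' is exactly what fails in $T_{fg}$. Take $G$ trivial, $\bar a=e_1$, $\bar b=e_1[e_2,e_3]$ in $\F_3$: Lemma~\ref{D1} shows these fork over $\emptyset$ (so by the theorem no separating decomposition exists), yet Lemma~\ref{D3} shows $acl^{eq}(e_1)\cap acl^{eq}(e_1[e_2,e_3])=acl^{eq}(\emptyset)$, so no witness $c$ exists even among imaginaries. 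If your argument went through, it would show that forking over free factors is always detected by intersections of algebraic closures, i.e.\ essentially that $T_{fg}$ is $1$-based --- the negation of the theorem this paper is about. The genuine proof of the forward direction in \cite{PerSklFork} cannot go through algebraic closure; it must exhibit a forking formula directly, and does so using homogeneity of free groups, the shortening argument and test-sequence techniques, none of which appear in your outline. The ``bookkeeping'' you defer to the end is therefore not bookkeeping but the entire difficulty, and the reduction you propose to carry it out with is unsound.
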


We will also use the following theorems from \cite{AmpleOK}. We note that $acl^c(A)$ denotes the set of conjugacy classes in 
$acl^{eq}(A)$.

\begin{thm}\label{AlgConj}
Let $\bar{a},\bar{b},\bar{g}$ be finite tuples from $\F_n$. Then we have:
$$ acl^{eq}(\bar{a})\cap acl^{eq}(\bar{b})=acl^{eq}(\bar{g})$$ 
if and only if 
$$ acl(\bar{a})\cap acl(\bar{b})= acl(\bar{g})$$ 
and 
$$ acl^c(\bar{a})\cap acl^c(\bar{b})=acl^c(\bar{g})$$
\end{thm}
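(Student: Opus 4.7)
The plan is to extract the theorem from Sela's geometric elimination of imaginaries (Theorem \ref{Elim}). The left-to-right implication is immediate, since $acl(A)$ is $acl^{eq}(A)$ restricted to the home sort and $acl^c(A)$ is $acl^{eq}(A)$ restricted to the conjugacy-class sort; any equality of the $acl^{eq}$-intersections therefore restricts to the two equalities on the right. For the converse, assume both $acl(\bar{a}) \cap acl(\bar{b}) = acl(\bar{g})$ and $acl^c(\bar{a}) \cap acl^c(\bar{b}) = acl^c(\bar{g})$, and take any $e \in acl^{eq}(\bar{a}) \cap acl^{eq}(\bar{b})$. Applying Theorem \ref{Elim} to the $\emptyset$-definable equivalence relation whose class is $e$ yields a uniformly finite-to-one, $\emptyset$-definable relation coding $e$ by a tuple drawn from $G^k$ and from the basic sorts (i)--(iv). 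All entries of this tuple lie in $acl^{eq}(\bar{a}) \cap acl^{eq}(\bar{b})$, so it is enough to show entry by entry that each of them lies in $acl^{eq}(\bar{g})$.

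Entries in $G^k$ fall directly into $acl(\bar{a}) \cap acl(\bar{b}) = acl(\bar{g})$, and entries of sort (i) fall into $acl^c(\bar{a}) \cap acl^c(\bar{b}) = acl^c(\bar{g})$; both are absorbed by $acl^{eq}(\bar{g})$. The substantive work is the treatment of the coset sorts (ii)--(iv). For a left-coset class $[(x,y)] \in acl^{eq}(\bar{a}) \cap acl^{eq}(\bar{b})$, the map $[(x,y)] \mapsto \{y,y^{-1}\}$ is $\emptyset$-definable because $\langle y \rangle = C_G(y)$ is recovered from the class; hence $y \in acl(\bar{a}) \cap acl(\bar{b}) = acl(\bar{g})$ and the subgroup $\langle y^m \rangle$ is $\bar{g}$-definable. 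What remains is to show that the coset $x\langle y^m \rangle$ itself is in $acl^{eq}(\bar{g})$. The approach is to exhibit a $\emptyset$-definable finite-to-one map from the $m$-left-coset sort into a product of the home sort and the conjugacy-class sort, built from $y$ together with conjugacy classes of judiciously chosen words in $x$ and $y$ that are preserved by $E_{2_m}$; this reduces the coset to data already controlled by $acl$ and $acl^c$. The right-coset sort (iii) and the double-coset sort (iv) are treated by symmetric arguments, with $(iv)$ requiring care because there are now two cyclic generators and a two-sided coset to encode.

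The main obstacle is precisely the design of that finite-to-one encoding. The conjugacy class of a single coset representative is not invariant under $E_{2_m}$ -- the representatives $x, xy^m, xy^{2m}, \dots$ have genuinely different conjugacy classes -- so a real invariant must be assembled from joint data rather than from any single coset element. Constructing such a system of invariants and verifying that its joint level sets are finite is where the specific structure of $\F_n$, and in particular the tight description of centralizers of elements of maximal cyclic subgroups, is expected to enter most strongly.
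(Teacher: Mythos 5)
First, a point of reference: the paper does not prove this statement at all --- it is imported verbatim from \cite{AmpleOK} (``We will also use the following theorems from \cite{AmpleOK}''), so there is no in-paper proof to compare against; your attempt has to stand on its own as a reconstruction of the Ould Houcine--Tent argument.

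Your skeleton is the right one (and, as far as one can tell, the same as theirs): the left-to-right direction is formal, and for the converse you reduce, via Theorem \ref{Elim}, an arbitrary $e\in acl^{eq}(\bar a)\cap acl^{eq}(\bar b)$ to the finitely many entries of $R_E(\bar c,\cdot)$, each of which is algebraic over $e$, and then argue sort by sort. The home-sort and conjugacy-sort entries are correctly disposed of by the two hypotheses. The genuine gap is exactly where you say it is: for the coset sorts you extract $y\in acl(\bar g)$ but then only assert that the coset $x\langle y^m\rangle$ ``is expected'' to be encoded by a finite-to-one $\emptyset$-definable system of invariants valued in the home and conjugacy sorts, without producing one. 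That is the entire content of the theorem for these sorts, so the proof is incomplete at its crux. Worse, the particular difficulty you flag for the left coset is a red herring while the real difficulty is elsewhere: for sorts $(ii)_m$ and $(iii)_m$ there is in fact a direct invariant landing in the home sort, namely the element $xy^mx^{-1}$, which is literally constant on the class (since representatives differ by right multiplication by elements of $\langle y^m\rangle$, which commute with $y$) and whose fibre, for fixed $y$, consists of the $m$ cosets $xy^j\langle y^m\rangle$ because $C(y^m)=\langle y\rangle$; this puts $xy^mx^{-1}$ into $acl(\bar a)\cap acl(\bar b)=acl(\bar g)$ and makes the class algebraic over $(y,xy^mx^{-1})$, with no conjugacy classes needed. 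The case that genuinely requires the conjugacy hypothesis and the fine structure of $\F_n$ is the double-coset sort $(iv)$, where no such two-sided-invariant real element exists and one must work with conjugacy classes of words such as $b^{-1}a^mb\,c^n$ (invariant up to conjugation by $\langle c^n\rangle$) and then prove finiteness of the fibres using the description of centralizers and commensurators in free groups. You dismiss this case in one clause (``treated by symmetric arguments, with $(iv)$ requiring care''), but it is not symmetric to the one-sided case and is precisely where the argument of \cite{AmpleOK} does its nontrivial work; until it is carried out, the proof is not complete.
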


\begin{thm}\label{conj}
Let $A$ be a non abelian subgroup of $\F_n$, such that $\F_n$ is freely indecomposable with respect to $A$. Let $b\in\F_n$. Then the conjugacy 
class of $b$ belongs to $acl^c(A)$ 
if and only if in any cyclic $JSJ$-decomposition of $\F_n$ relative to $A$, either $b$ is conjugate to some element of 
the elliptic abelian neighborhood of a rigid vertex group or it is a conjugate to an element of a boundary subgroup of a 
surface type vertex group.
\end{thm}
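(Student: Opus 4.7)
The plan is to use the \emph{modular group} $\mathrm{Mod}(\F_n,A)$ of automorphisms of $\F_n$ fixing $A$ pointwise (generated by Dehn twists along the edges of the relative cyclic $JSJ$-decomposition and by mapping class group elements of the surface-type vertices fixing each boundary component). Such automorphisms preserve the type over $A$ of every tuple, so if $[b]^c \in acl^c(A)$ then the orbit of $[b]^c$ under $\mathrm{Mod}(\F_n,A)$ is necessarily finite. The proof then reduces to matching this finiteness with the geometric description in terms of the $JSJ$.

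For the backward direction, suppose first that $b$ lies in the elliptic abelian neighborhood of a rigid vertex group $V$. If $V$ is the vertex containing $A$, then Theorem \ref{AlgClos} already gives $b \in acl(A)$, hence $[b]^c \in acl^c(A)$. If $V$ is another rigid vertex, the $JSJ$-theory forces $V$ to be rigid in all splittings of $\F_n$ relative to $A$, so the conjugacy class of $V$ (and consequently of its elliptic abelian neighborhood) has only finitely many images under automorphisms of $\F_n$ preserving $\mathrm{tp}(A)$; combined with the fact that in a free group, an element commuting with a given element is a power of a unique root, this forces $[b]^c$ to be algebraic. For $b$ in a boundary subgroup of a surface type vertex, the boundary subgroups correspond to incident edge groups, which are elliptic in every tree of $\mathcal{T}_{(\F_n,A)}$, and Dehn twists along them act trivially on the edge-group elements themselves; hence the conjugacy class of $b$ is fixed by $\mathrm{Mod}(\F_n,A)$, and a further rigidity argument (again via the finite ambiguity of the relative $JSJ$) yields that $[b]^c \in acl^c(A)$.

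For the forward direction I would prove the contrapositive: assuming $b$ is not of either form, I produce infinitely many automorphisms $\phi_k \in \mathrm{Mod}(\F_n,A)$ with pairwise non-conjugate $\phi_k(b)$, contradicting $[b]^c \in acl^c(A)$. Two cases: if $b$ is hyperbolic in the relative $JSJ$-tree, pick an edge $e$ crossed by its axis, let $c$ be a generator of the edge stabilizer, and apply powers $\tau_c^k$ of the Dehn twist along $e$; a standard normal-form argument in the Bass--Serre tree shows the translation length of $\phi_k(b)$ grows with $k$, so the conjugacy classes are distinct. If $b$ is elliptic, then (since every element of a rigid vertex belongs to its own elliptic abelian neighborhood) it must be conjugate into a surface type vertex $V_{\Sigma}$ but not into any boundary subgroup, i.e.\ $b$ represents a non-peripheral element of $\pi_1(\Sigma)$; infinitely many mapping class group elements of $\Sigma$ (fixing the boundary pointwise) then produce infinitely many conjugacy classes of non-peripheral elements in $\pi_1(\Sigma)$, which extend to modular automorphisms of $\F_n$ by identity on the rest of the $JSJ$.

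The main obstacle is the backward direction for a rigid vertex $V$ \emph{not} containing $A$: here Theorem \ref{AlgClos} does not describe $acl(A)$ directly at $V$, and one must argue that the conjugacy class of $V$ (and of its elliptic abelian neighborhood) is algebraic over $A$ by exploiting the uniqueness-up-to-conjugation of rigid vertices in relative $JSJ$-decompositions, combined with cyclicity of centralizers in $\F_n$. A secondary subtlety is that the surface-topology input used in Case~2 of the forward direction must provide pairwise \emph{non-conjugate} images in $\pi_1(\Sigma)$ (not merely distinct elements), which is standard but requires care about how peripheral curves are excluded from the hypothesis.
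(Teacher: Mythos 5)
First, note that the paper does not prove Theorem \ref{conj} at all: it is quoted verbatim from \cite{AmpleOK} (Ould Houcine--Tent), so there is no in-paper argument to compare yours against; I am judging your proposal on its own merits against what the cited result actually requires.

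Your forward direction (the contrapositive) is essentially the standard argument and is sound in outline: modular automorphisms fix $A$ pointwise, hence preserve types over $A$, and producing infinitely many pairwise non-conjugate images of $b$ under Dehn twists (hyperbolic case) or mapping classes of a surface vertex (non-peripheral elliptic case) contradicts algebraicity of the imaginary $[b]^c$. One local error: the translation length of $\tau_c^k(b)$ in the Bass--Serre tree of the splitting does \emph{not} grow with $k$ --- a Dehn twist preserves the splitting, hence preserves translation lengths in that tree. What grows is the cyclically reduced word length with respect to a fixed basis (equivalently, one distinguishes the classes by the normal form of the cyclic word, which picks up unbounded powers of $c$). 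This is fixable but the invariant you name is the wrong one.

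The genuine gap is in the backward direction. You reduce algebraicity of $[b]^c$ to finiteness of its orbit under $\mathrm{Mod}(\F_n,A)$ (or under $\Aut(\F_n/A)$) via ``rigidity'' and ``finite ambiguity of the relative $JSJ$.'' But $acl^{eq}(A)$ is computed in a monster model $\mathbb{M}\succeq\F_n$: you must show that $\tp([b]^c/A)$ has only finitely many realizations \emph{in $\mathbb{M}$}, i.e.\ that the elements $b'\in\mathbb{M}$ (typically not in $\F_n$) realizing $\tp(b/A)$ fall into finitely many $\mathbb{M}$-conjugacy classes. Finiteness of the orbit under automorphisms of the standard model is a necessary condition, not a sufficient one, and passing from one to the other is precisely the hard content of \cite{AmpleOK} and \cite{AlgOV}: one needs the machinery of preretractions/the shortening argument (equivalently, the relative homogeneity results describing all realizations of $\tp(b/A)$ in models of $T_{fg}$) to show that such realizations are controlled by the relative $JSJ$ and the modular group. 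Your sketch for the rigid vertex not containing $A$ (``uniqueness-up-to-conjugation of rigid vertices \dots forces $[b]^c$ to be algebraic'') and for boundary elements of surface vertices conflates these two notions and omits the step that actually carries the weight of the theorem. As it stands, the backward direction is not a proof plan but a restatement of what needs to be proved.
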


\section{Witnessing Ampleness}
The following sequence in $\F_{\omega}$ witnesses $n$-ampleness, for any $n<\omega$. We give the sequence recursively:

$$a_0=e_1$$ 
$$a_{i+1}=a_i[e_{2i+2},e_{2i+3}], \ \textrm{for}\ 0\leq i<\omega$$

We fix a natural number $n\geq 1$, and we show that $a_0,\ldots, a_n$ witnesses $n$-ampleness 
by verifying the requirements of Definition \ref{Ample}. 

\begin{lemma}\label{D1}
$a_0=e_1$ forks with $a_n=e_1[e_2,e_3]\ldots[e_{2n},e_{2n+1}]$.
\end{lemma}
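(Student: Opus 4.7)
The plan is to apply Theorem \ref{Fork} (the Perin--Sklinos description of forking independence over free factors) with the trivial free factor $G=\{1\}$. This reduces the claim to showing that there is no free decomposition $\F_{\omega}=\F*\F'$ with $a_0=e_1\in\F$ and $a_n\in\F'$. (Although Theorem \ref{Fork} is stated for $\F_n$, we may work inside the free factor of $\F_{\omega}$ generated by $e_1,\dots,e_{2n+1}$, since if a forking-witnessing free decomposition existed globally it would restrict to a forking-witnessing free decomposition of this finitely generated factor.)

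First I would suppose for contradiction that $\F_{\omega}=\F*\F'$ with $e_1\in\F$ and $a_n\in\F'$. Then I would pass to the abelianization map $\pi:\F_{\omega}\twoheadrightarrow \Z^{(\omega)}$. The key algebraic fact is that a free product decomposition induces a direct sum decomposition of the abelianization: $\pi(\F)\cap\pi(\F')=0$.

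Next I would compute the images of our two elements. Since every commutator lies in the kernel of $\pi$, the definition $a_n=e_1[e_2,e_3]\cdots[e_{2n},e_{2n+1}]$ gives $\pi(a_n)=\pi(e_1)=e_1$ in $\Z^{(\omega)}$. From $e_1\in\F$ we obtain $e_1=\pi(e_1)\in\pi(\F)$, while from $a_n\in\F'$ we obtain $e_1=\pi(a_n)\in\pi(\F')$. Hence $e_1$ lies in $\pi(\F)\cap\pi(\F')=0$, which contradicts $e_1\neq 0$ in $\Z^{(\omega)}$.

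There is no substantive obstacle here: the only point requiring some care is the reduction from $\F_{\omega}$ to a finitely generated free factor so that Theorem \ref{Fork} applies verbatim, but this is routine since the elements $a_0$ and $a_n$ live in the free factor $\langle e_1,\dots,e_{2n+1}\rangle$ and forking is witnessed by finite tuples. The genuine mathematical content is the abelianization trick, which exploits that $a_n$ has the same image as $e_1$ under $\pi$ while simultaneously being supposedly disjoint from it in a free product decomposition.
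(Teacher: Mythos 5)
Your proof is correct, but it follows a genuinely different route from the paper's. The paper's argument is a one-line genericity argument: since $e_1$ is primitive it is generic, so if $a_0$ were independent from $a_n$ over $\emptyset$, the standard translation property of generics in a stable group would make $a_0^{-1}a_n=[e_2,e_3]\cdots[e_{2n},e_{2n+1}]$ generic as well, contradicting Theorem \ref{GenPil} because that element is not primitive. You instead invoke Theorem \ref{Fork} with trivial base and kill the putative free decomposition $\F=\F_1*\F_2$, $e_1\in\F_1$, $a_n\in\F_2$, by abelianizing: a free product abelianizes to a direct sum, while $\pi(a_n)=\pi(e_1)\neq 0$ would have to lie in both summands. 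Both arguments are sound; yours trades Pillay's genericity theorem for the Perin--Sklinos characterization of independence, and has the advantage of being completely explicit (the paper leaves the generic-translation step and the fact that primitive elements are generic --- the unstated converse of Theorem \ref{GenPil} --- to the reader), at the cost of appealing to the heavier forking description. Your reduction to the finitely generated factor $\langle e_1,\dots,e_{2n+1}\rangle$ is the right thing to say, though the cleanest justification is simply that $\F_{2n+1}$ is an elementary submodel of $\F_\omega$ containing both $a_0$ and $a_n$, so independence over $\emptyset$ can be computed there and Theorem \ref{Fork} applies verbatim.
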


\begin{proof}
Immediate, since $[e_2,e_3]\ldots[e_{2n},e_{2n+1}]$ is not a primitive element, thus by Theorem \ref{GenPil} 
is not generic.
\end{proof}

\begin{lemma}\label{D2}
Let $1\leq i<n$. Then $a_0,\ldots,a_{i-1}$ does not fork with $a_{i+1}$ over $a_i$. 
\end{lemma}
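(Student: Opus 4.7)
The plan is to apply Theorem \ref{Fork} with the free factor $G = \langle a_i\rangle$. The key preliminary observation is that $a_i$ is a \emph{primitive} element of $\F_\omega$: since $a_i = e_1 \cdot [e_2,e_3]\cdots[e_{2i},e_{2i+1}]$, the Nielsen transformation replacing $e_1$ by $a_i$ and leaving all other $e_k$ fixed is invertible (one recovers $e_1 = a_i \cdot [e_2,e_3]^{-1}\cdots[e_{2i},e_{2i+1}]^{-1}$ from the new generators), so $\{a_i, e_2, e_3, \ldots\}$ is again a basis of $\F_\omega$. In particular $\langle a_i\rangle$ is a free factor, and one gets the free decomposition
\[
\F_\omega \;=\; \underbrace{\langle e_2,\ldots,e_{2i+1}\rangle}_{F_L}\;*\;\langle a_i\rangle\;*\;\underbrace{\langle e_{2i+2},e_{2i+3},\ldots\rangle}_{F_R}.
\]

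Next I verify that $a_0,\ldots, a_{i-1}$ sit in the left piece $F_L * \langle a_i\rangle$ while $a_{i+1}$ sits in the right piece $\langle a_i\rangle * F_R$. The right inclusion is immediate: $a_{i+1} = a_i\,[e_{2i+2},e_{2i+3}] \in \langle a_i\rangle * F_R$. For the left one, for each $0\leq j<i$ the recursion gives
\[
a_j \;=\; a_i \cdot \bigl([e_{2j+2},e_{2j+3}]\cdots[e_{2i},e_{2i+1}]\bigr)^{-1},
\]
and the trailing word is in $\langle e_2,\ldots,e_{2i+1}\rangle = F_L$; hence $a_j\in F_L * \langle a_i\rangle$.

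With these two inclusions in hand, Theorem \ref{Fork} applied to the tuples $(a_0,\ldots,a_{i-1})$ and $a_{i+1}$ with $G=\langle a_i\rangle$ yields the desired non-forking over $a_i$. There is no substantial obstacle here beyond the bookkeeping of the free decomposition; the only minor subtlety is that Theorem \ref{Fork} is stated for $\F_n$, so one works inside $\F_N$ for any $N\geq 2i+3$ (in which $\langle a_i\rangle$ is already a free factor by the same Nielsen move, restricted to the generators in play) and then uses $\F_N\preceq \F_\omega$ to transport the forking statement to $\F_\omega$.
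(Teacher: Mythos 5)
Your proof is correct and follows essentially the same route as the paper: exhibit the free decomposition $\langle e_2,\ldots,e_{2i+1}\rangle * \langle a_i\rangle * \langle e_{2i+2},e_{2i+3}\rangle$ and invoke the Perin--Sklinos characterization of forking over a free factor (Theorem \ref{Fork}), with your explicit verification of the inclusions $a_j\in F_L*\langle a_i\rangle$ and $a_{i+1}\in\langle a_i\rangle*F_R$ being exactly the ``easy to see'' step the paper leaves implicit. Your remark about passing from $\F_N$ to $\F_\omega$ via elementarity is a reasonable extra precaution but not a divergence in method.
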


\begin{proof}
 We first note that for each $i$, $\langle a_i\rangle$ is a free factor of $\F_{2i+3}$. Thus, by Theorem \ref{ForkPil}, we only need to find a 
 free factorization $\F_{2i+3}=\F*\langle a_i\rangle* \F'$, such that $a_0,\ldots,a_{i-1}$ is in 
 $\F*\langle a_i\rangle$ and $a_{i+1}$ is in $\langle a_i\rangle* \F'$. It is easy to see that the following free factorization is such
 $\F_{2i+3}=\langle e_2,\ldots, e_{2i},e_{2i+1}\rangle *\langle a_i\rangle * \langle e_{2i+2},e_{2i+3}\rangle$.
\end{proof}

\begin{lemma}\label{D3}
$acl^{eq}(e_1)\cap acl^{eq}(e_1[e_2,e_3])=acl^{eq}(\emptyset)$. 
 \end{lemma}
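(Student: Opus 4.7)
The plan is to apply Theorem~\ref{AlgConj}, which reduces the equality $acl^{eq}(e_1)\cap acl^{eq}(e_1[e_2,e_3])=acl^{eq}(\emptyset)$ to verifying separately that $acl(e_1)\cap acl(e_1[e_2,e_3])=acl(\emptyset)$ in the real sort and $acl^c(e_1)\cap acl^c(e_1[e_2,e_3])=acl^c(\emptyset)$ in the conjugacy sort. Setting $v:=e_1[e_2,e_3]$, both $e_1$ and $v$ are primitive in $\F_{\omega}$: indeed, $\{v,e_2,e_3,e_4,\ldots\}$ is a basis, since one recovers $e_1=v\cdot[e_2,e_3]^{-1}$. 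So the situation is symmetric in $e_1$ and $v$, and the problem reduces to controlling the algebraic closure of a single primitive element.

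For the real sort, I plan to establish that $acl(u)=\langle u\rangle$ for every primitive $u\in\F_{\omega}$. The inclusion $\langle u\rangle\subseteq acl(u)$ is immediate. For the converse, extend $u$ to a basis and exhibit Nielsen-type automorphisms of $\F_{\omega}$ fixing $u$ but altering the remaining basis elements (for instance $e_j\mapsto e_j u^{k}$ for varying $k$, together with further Nielsen transformations on the other generators), producing infinitely many distinct images for any $x\notin\langle u\rangle$. Granting this, $acl(e_1)=\langle e_1\rangle$ and $acl(v)=\langle v\rangle$. Since $e_1$ and $v$ do not commute in $\F_{\omega}$, and since centralizers in a free group are cyclic, any common non-trivial element of $\langle e_1\rangle$ and $\langle v\rangle$ would place $e_1$ and $v$ in a common maximal cyclic subgroup and so force them to commute; hence the two maximal cyclic subgroups meet trivially, and $acl(e_1)\cap acl(v)=\{1\}\subseteq acl(\emptyset)$.

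For the conjugacy sort, the plan is analogous: show that $acl^c(u)=\{[u^k]:k\in\Z\}$ for every primitive $u$, by the same kind of automorphism argument applied to conjugacy classes rather than elements. A class in $acl^c(e_1)\cap acl^c(v)$ is then represented simultaneously by some $e_1^k$ and, up to conjugation in $\F_{\omega}$, by some $v^m$; in a free group such a relation again places $e_1$ and $v$ in a common cyclic subgroup and so forces them to commute, which is impossible unless $k=m=0$. Only the class of $1$ survives, and it belongs to $acl^c(\emptyset)$.

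The main obstacle is precisely the algebraic-closure computation for a single primitive element. The structural tools readily available, Theorems~\ref{AlgClos} and~\ref{conj}, both require the base to be \emph{non-abelian} with $\F_n$ freely indecomposable relative to it, and both hypotheses fail for a cyclic base like $\{u\}$. The descriptions $acl(u)=\langle u\rangle$ and $acl^c(u)=\{[u^k]\}$ have to be obtained either directly via a Nielsen/Whitehead-type automorphism argument as above, or through a detour via a rigid non-abelian enlargement (for example $\langle e_1,[e_2,e_3]\rangle$ inside $\F_3$, where the cited theorems do apply) and then extracting the piece that depends only on $u$.
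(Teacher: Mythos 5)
Your proposal follows the same route as the paper: reduce via Theorem~\ref{AlgConj} to the real and conjugacy sorts, compute $acl(u)=\langle u\rangle$ and $acl^c(u)=\{[u^k]:k\in\Z\}$ for the primitive elements $u=e_1$ and $u=v=e_1[e_2,e_3]$, and intersect; the paper simply asserts these computations (``it is not hard to see''), while you correctly identify them as the real content and supply the standard Nielsen-automorphism argument (equivalently, one can quote the fact that the algebraic closure of a subset of a free factor stays in that free factor). One small slip in the conjugacy-sort step: if $e_1^k$ is conjugate to $v^m$ with $k,m\neq 0$, this places a \emph{conjugate} of $e_1$, not $e_1$ itself, into a common maximal cyclic subgroup with $v$, so you cannot conclude that $e_1$ and $v$ commute; the correct conclusion is that $v$ would be conjugate to $e_1^{\pm 1}$, which is ruled out by comparing cyclically reduced lengths ($5$ versus $1$) or, after matching exponent sums in the abelianization, by uniqueness of roots up to conjugacy. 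With that repair the argument is complete and matches the paper's.
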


\begin{proof}
It is not hard to see that $acl(e_1)=\langle e_1 \rangle$ and $acl^c(e_1)$ is the set of conjugacy classes of elements in 
$\langle e_1 \rangle$. The same is true for $acl(e_1[e_2,e_3])$ and $acl^c(e_1[e_2,e_3])$.  
Thus, $acl(e_1)\cap acl(e_1[e_2,e_3])=acl(\emptyset)$ and $acl^c(e_1)\cap acl^c(e_1[e_2,e_3])=acl^c(\emptyset)$. 
By Theorem \ref{AlgConj} we are done. 

\end{proof}

To verify Definition \ref{Ample} (4) we first compute the $JSJ$-decomposition of $\F_{2i+1}$ relative to $\langle a_0,\ldots, a_{i-1}, a_i\rangle$ 
and the $JSJ$-decomposition of $\F_{2i+3}$ relative to $\langle a_0,\ldots, a_{i-1}, a_{i+1}\rangle$. We give the following pictures:

\begin{figure}[h!]
\centering
\includegraphics[width=.5\textwidth]{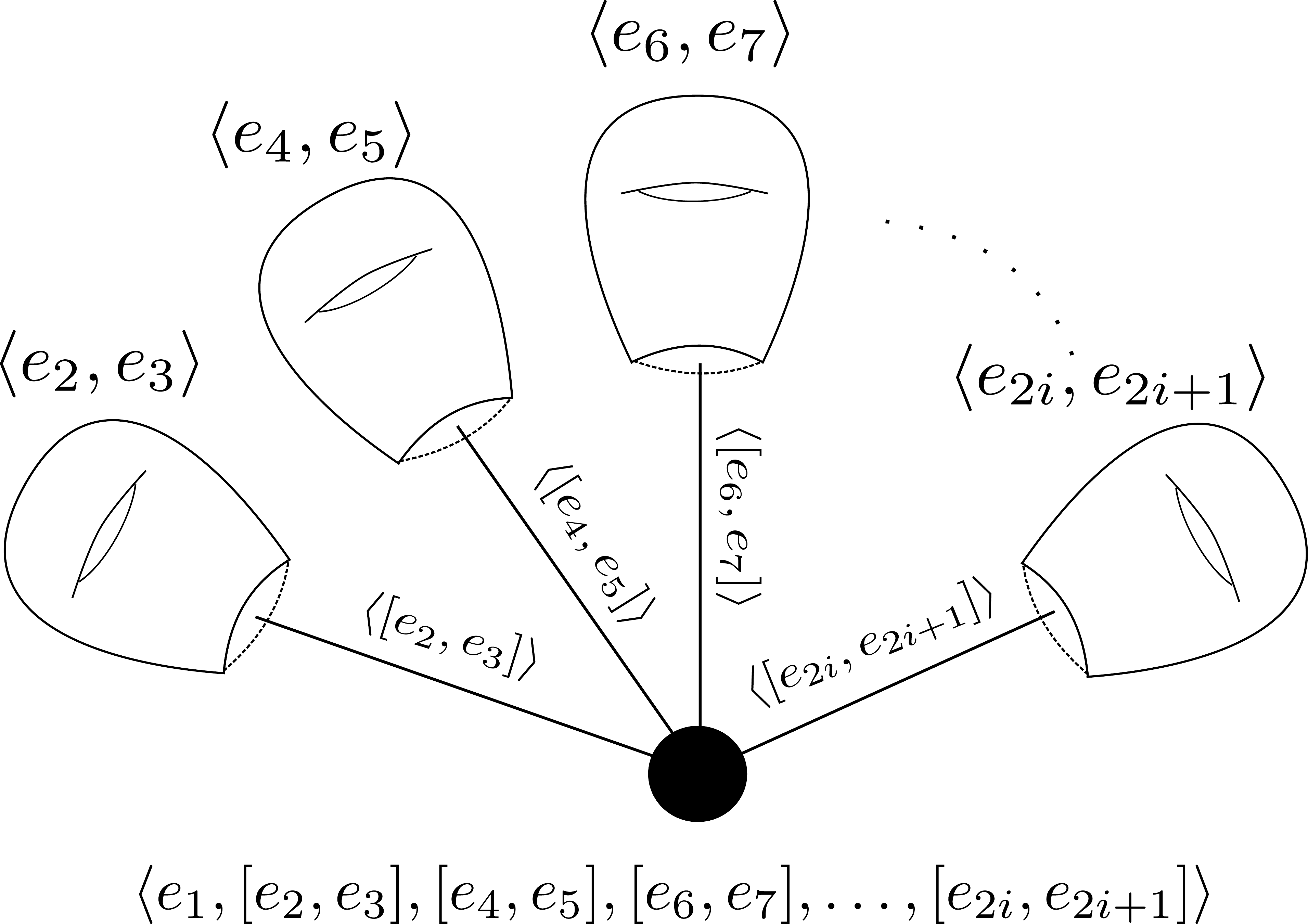}
\caption{The relative $JSJ$-decomposition of $\F_{2i+1}$ with respect to $\langle a_0,\ldots, a_{i-1}, a_i\rangle$}
\end{figure}

\begin{figure}[h!]
\centering
\includegraphics[width=.75\textwidth]{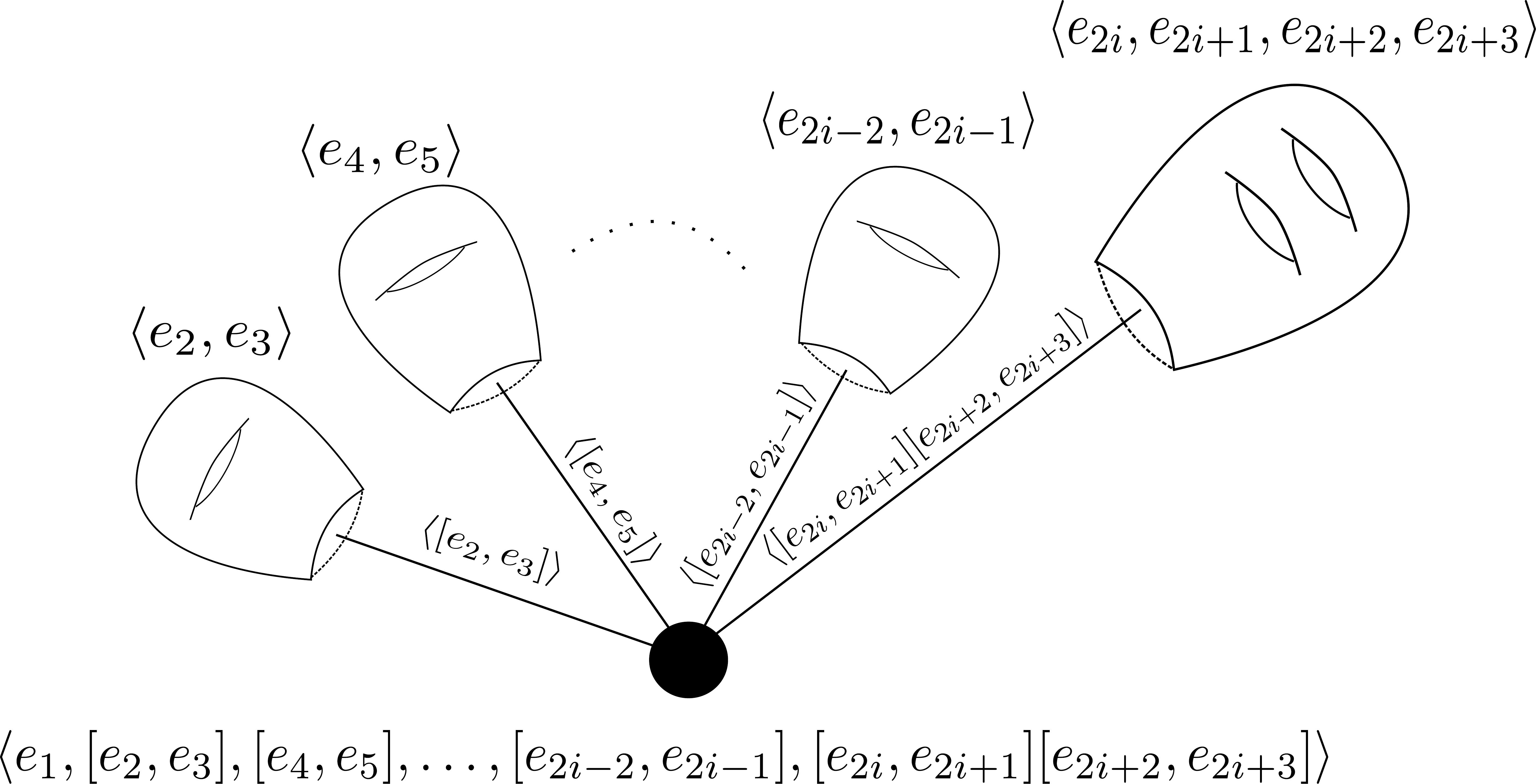}
\caption{The relative $JSJ$-decomposition of $\F_{2i+3}$ with respect to $\langle a_0,\ldots,a_{i-1}, a_{i+1}\rangle$}
\end{figure}

\begin{lemma}\label{gog1}
The $JSJ$-decomposition of $\F_{2i+1}$ relative to $\langle a_0,\ldots, a_{i-1}, a_i\rangle$ is the graph of groups 
given by figure $3$.
\end{lemma}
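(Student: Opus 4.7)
The plan is to exhibit the graph of groups $\Gamma_i$ of Figure 3 explicitly, verify that it lies in the relative class $\mathcal{T}_{(\F_{2i+1}, \langle a_0, \ldots, a_i\rangle)}$ and is universally elliptic, and then identify it with the JSJ by using Lemma \ref{JSJ} together with Remark \ref{RelJSJ} to check that no further refinement is possible at any vertex. Concretely, $\Gamma_i$ is a star with a central rigid vertex $v_0$ whose vertex group is freely generated by $y_0, y_1, \ldots, y_i$, together with $i$ surface-type leaves $v_1, \ldots, v_i$, where $v_k$ carries $\langle e_{2k}, e_{2k+1}\rangle$ realized as the fundamental group of a once-punctured torus with boundary $[e_{2k}, e_{2k+1}]$; the edge $\{v_0, v_k\}$ has cyclic edge group identified with $\langle y_k\rangle$ on one side and $\langle [e_{2k}, e_{2k+1}]\rangle$ on the other. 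A Bass--Serre computation yields $\pi_1(\Gamma_i) = \F_{2i+1}$ under the identifications $y_0 = e_1$ and $y_k = [e_{2k}, e_{2k+1}]$, so that $G_{v_0}$ coincides with $\langle e_1, [e_2, e_3], \ldots, [e_{2i}, e_{2i+1}]\rangle = \langle a_0, a_1, \ldots, a_i\rangle$.

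Universal ellipticity is then immediate: $\langle a_0, \ldots, a_i\rangle$ equals $G_{v_0}$ and hence is elliptic in $\Gamma_i$, while each edge group $\langle [e_{2k}, e_{2k+1}]\rangle$ is contained in $\langle a_0, \ldots, a_i\rangle$ via the identity $[e_{2k}, e_{2k+1}] = a_{k-1}^{-1} a_k$, so it is elliptic in every tree of the class by definition.

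To conclude, I apply Lemma \ref{JSJ} to $T = \Gamma_i$: the JSJ is the refinement of $T$ by the relative JSJs of the vertex groups, with the families $P_u$ prescribed by Remark \ref{RelJSJ}. At the central vertex, $P_{v_0}$ contains $G_{v_0}$ itself (the conjugate of $\langle a_0, \ldots, a_i\rangle$ by the identity), forcing the relative JSJ of $G_{v_0}$ to be trivial. At each surface vertex $v_k$, the standard Bass--Serre fact that conjugates of distinct vertex groups in $\Gamma_i$ intersect in subgroups of edge groups, together with the rank $i + 1 \geq 2$ of $\langle a_0, \ldots, a_i\rangle$, rules out any conjugate of $\langle a_0, \ldots, a_i\rangle$ being contained in $G_{v_k}$; hence $P_{v_k}$ reduces to the incident edge group $\langle [e_{2k}, e_{2k+1}]\rangle$, and by Example \ref{Exa}(i) the corresponding relative JSJ is precisely the once-punctured torus structure already attached to $v_k$. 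Thus no vertex admits a nontrivial refinement, and $\Gamma_i$ is the JSJ.

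The main obstacle will be the bookkeeping needed to apply Remark \ref{RelJSJ} cleanly: in particular, verifying that no unexpected conjugate of $\langle a_0, \ldots, a_i\rangle$ sits inside a surface vertex group, and that the naive refinement at each surface vertex produced by Example \ref{Exa}(i) (which introduces an extra cyclic rigid vertex carrying the boundary) simplifies, after collapsing the resulting valence-two cyclic vertex, to the surface-type structure already recorded in $\Gamma_i$.
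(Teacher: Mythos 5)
Your proof is correct and follows the same route as the paper's own (two-sentence) argument: universal ellipticity of the given splitting, then Remark \ref{RelJSJ} together with Example \ref{Exa}(i) to see that no vertex group admits a further (nontrivial) relative refinement. You simply carry out explicitly the bookkeeping the paper leaves implicit --- the Bass--Serre identification of $\pi_1(\Gamma_i)$ with $\F_{2i+1}$, the containment of the edge groups in $\langle a_0,\ldots,a_i\rangle$, and the rank argument showing no conjugate of $\langle a_0,\ldots,a_i\rangle$ sits inside a surface vertex group.
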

\begin{proof}
It is immediate that the given splitting is universally elliptic. That is a $JSJ$-decomposition 
follows from Remark \ref{RelJSJ} and Example \ref{Exa} (i).
\end{proof}

\begin{lemma}\label{gog2}
The $JSJ$-decomposition of $\F_{2i+3}$ relative to $\langle a_0,\ldots, a_{i-1}, a_{i+1}\rangle$ is the graph of groups 
given by figure $4$.
\end{lemma}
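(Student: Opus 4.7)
My plan is to mirror the two-step strategy of Lemma \ref{gog1}, with Example \ref{Exa}(ii) playing the role that Example \ref{Exa}(i) played there. First I would verify that the splitting displayed in figure $4$ is universally elliptic; then I would apply Remark \ref{RelJSJ} to upgrade it to a $JSJ$ by identifying the relative $JSJ$ at each of its vertex groups.

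Universal ellipticity is immediate: each edge group displayed in figure $4$ is a cyclic subgroup generated by an element of the fixed family, namely conjugates of $\langle a_{i-1}\rangle$ and $\langle a_{i+1}\rangle$. Hence each such edge stabilizer is elliptic in every $\F_{2i+3}$-tree belonging to the relative class $\mathcal{T}_{(\F_{2i+3},\langle a_0,\ldots,a_{i-1},a_{i+1}\rangle)}$.

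For the $JSJ$ conclusion, Remark \ref{RelJSJ} reduces the problem to checking the relative $JSJ$ of each vertex group with respect to its incident edge groups augmented by the conjugates of fixed subgroups contained in it. The rigid vertex $\langle e_1,\ldots,e_{2i-1}\rangle\cong\F_{2i-1}$ contains $a_0,\ldots,a_{i-1}$ and has incident edge group $\langle a_{i-1}\rangle$; since $a_{i-1}=e_1[e_2,e_3]\ldots[e_{2i-2},e_{2i-1}]$ is a product of commutators and hence non-primitive, $\F_{2i-1}$ is freely indecomposable relative to $\langle a_{i-1}\rangle$, so its relative $JSJ$ is trivial. The surface-type vertex falls within the scope of Example \ref{Exa}(ii): the algebraic identity $[e_{2i},e_{2i+1}][e_{2i+2},e_{2i+3}]=a_{i-1}^{-1}a_{i+1}$ is precisely the boundary relation of a genus-$2$ surface whose two boundary components are identified with the fixed elements $a_{i-1}$ and $a_{i+1}$, and Example \ref{Exa}(ii) supplies the corresponding local $JSJ$ once we change the free basis to make the commutator product visible.

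The main obstacle is extending the single-cyclic-fixed-subgroup situation of Example \ref{Exa}(ii) to one in which both $\langle a_{i-1}\rangle$ and $\langle a_{i+1}\rangle$ play the role of boundary subgroups simultaneously. Concretely, one must check that every boundary component of the underlying surface is \emph{used} (via an incident edge group or a conjugate of a fixed subgroup), that the surface-type vertex is genuinely flexible rather than rigid, and that no additional cyclic splitting respecting the boundary subgroups is overlooked. Once this bookkeeping is settled, Remark \ref{RelJSJ} assembles the local pieces into the graph of groups displayed in figure $4$.
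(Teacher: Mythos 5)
Your overall strategy (check universal ellipticity, then invoke Remark \ref{RelJSJ} together with Example \ref{Exa} to identify the local relative $JSJ$'s) is exactly the paper's, but the graph of groups you are arguing for is not the one in figure $4$, and the one you describe cannot be the relative $JSJ$. In your decomposition the generators $a_0,\ldots,a_{i-1}$ sit in the vertex $\langle e_1,\ldots,e_{2i-1}\rangle$ while $a_{i+1}$ sits in the surface vertex as a boundary element; the subgroup $\langle a_0,\ldots,a_{i-1},a_{i+1}\rangle$ is therefore not elliptic, so your splitting does not even belong to the class $\mathcal{T}_{(\F_{2i+3},\mathcal{H})}$ over which the relative $JSJ$ is taken. (It would also wreck Lemma \ref{D4}: Theorem \ref{AlgClos} applied to your picture would give $acl(a_0,\ldots,a_{i-1},a_{i+1})\supseteq \F_{2i-1}$, not $\langle a_0,\ldots,a_{i-1},a_{i+1}\rangle$.) The decomposition of figure $4$ has as its rigid vertex the fixed subgroup itself, $\langle a_0,\ldots,a_{i-1},a_{i+1}\rangle=\langle e_1,[e_2,e_3],\ldots,[e_{2i-2},e_{2i-1}],[e_{2i},e_{2i+1}][e_{2i+2},e_{2i+3}]\rangle$, to which are glued $i-1$ once-punctured tori $\langle e_{2j},e_{2j+1}\rangle$ along the edge groups $\langle a_{j-1}^{-1}a_j\rangle=\langle[e_{2j},e_{2j+1}]\rangle$ and one once-punctured genus-$2$ surface $\langle e_{2i},\ldots,e_{2i+3}\rangle$ along $\langle a_{i-1}^{-1}a_{i+1}\rangle=\langle[e_{2i},e_{2i+1}][e_{2i+2},e_{2i+3}]\rangle$; this is why the paper cites both Example \ref{Exa}(i) and (ii). Your mechanism for universal ellipticity (edge groups generated by elements of the fixed subgroup) is the right one, but it must be applied to these edge groups, not to $\langle a_{i-1}\rangle$ and $\langle a_{i+1}\rangle$.

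Two further claims in your argument are false as stated. First, $a_{i-1}=e_1[e_2,e_3]\cdots[e_{2i-2},e_{2i-1}]$ is \emph{not} a product of commutators and is in fact primitive (replace $e_1$ by $a_{i-1}$ via a Nielsen move); the paper itself uses the primitivity of the $a_i$'s in Lemma \ref{D2}, so $\F_{2i-1}$ is freely \emph{decomposable} relative to $\langle a_{i-1}\rangle$. Second, even for a genuinely non-primitive relative subgroup, free indecomposability does not give a trivial relative $JSJ$ --- Example \ref{Exa}(i) is precisely a counterexample. The correct local computation at the rigid vertex is that $\langle a_0,\ldots,a_{i-1},a_{i+1}\rangle$ admits no nontrivial cyclic splitting relative to itself (it is its own vertex group), while the surface vertices are handled by Examples (i) and (ii) after the change of basis you indicate.
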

\begin{proof}
It is immediate that the given splitting is universally elliptic. That is a $JSJ$-decomposition 
follows from Remark \ref{RelJSJ} and Example \ref{Exa} (i) and (ii).
\end{proof}

We are now ready to finish our proof.

\begin{lemma} \label{D4}
Let $1\leq i<n$. Then $acl^{eq}(a_0,\ldots,a_{i-1},a_i)\cap acl^{eq}(a_0,\ldots,a_{i-1},$ $a_{i+1})=acl^{eq}(a_0,\ldots,a_{i-1})$.
\end{lemma}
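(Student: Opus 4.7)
The plan is to apply Theorem \ref{AlgConj}, which lets me replace the required equality by the two separate statements
\[
acl(a_0,\ldots,a_i)\cap acl(a_0,\ldots,a_{i-1},a_{i+1}) = acl(a_0,\ldots,a_{i-1})
\]
and the analogous statement with $acl^c$ in place of $acl$. Set $A_j := \langle a_0,\ldots,a_j\rangle$ and $B := \langle a_0,\ldots,a_{i-1},a_{i+1}\rangle$. From the recursion $a_{j+1}=a_j[e_{2j+2},e_{2j+3}]$ one reads off the free product identifications
\[
A_i = A_{i-1} * \langle [e_{2i},e_{2i+1}]\rangle, \qquad B = A_{i-1} * \langle [e_{2i},e_{2i+1}][e_{2i+2},e_{2i+3}]\rangle,
\]
sitting inside $\F_{2i+3} = F_1 * F_2 * F_3$, where $F_1 := \langle e_1,\ldots,e_{2i-1}\rangle$, $F_2 := \langle e_{2i},e_{2i+1}\rangle$ and $F_3 := \langle e_{2i+2},e_{2i+3}\rangle$.

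For the $acl$-equality I would apply Theorem \ref{AlgClos} to the JSJ decompositions produced in Lemmas \ref{gog1} and \ref{gog2}. In each of them the rigid vertex containing the fixed subgroup is generated precisely by that subgroup; since centralisers in a free group are cyclic and none of the generators ($e_1$, the commutators $[e_{2j},e_{2j+1}]$, and the genus-two product $[e_{2i},e_{2i+1}][e_{2i+2},e_{2i+3}]$) is a proper power, the elliptic abelian neighbourhood of that vertex is exactly the vertex group itself. This yields $acl(a_0,\ldots,a_i)=A_i$, $acl(a_0,\ldots,a_{i-1},a_{i+1})=B$ and $acl(a_0,\ldots,a_{i-1})=A_{i-1}$ (the abelian case $i=1$ being handled exactly as in Lemma \ref{D3}). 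The equality $A_i\cap B = A_{i-1}$ then follows from a short normal form computation in $F_1 * F_2 * F_3$: any element of $A_i$ lies in $F_1*F_2$ and so carries no $F_3$-syllable, whereas any element of $B\setminus A_{i-1}$ contains a non-trivial power of $[e_{2i},e_{2i+1}][e_{2i+2},e_{2i+3}]$ in its $A_{i-1}*\langle\cdot\rangle$-normal form, whose expansion alternates $F_2$- and $F_3$-syllables that cannot cancel against anything.

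For the $acl^c$-equality I would combine Theorem \ref{conj} with the same JSJ pictures. Every surface-type boundary subgroup visible in Figures~3 and~4 is generated by some $a_{j-1}^{-1}a_j$ (or by $a_{i-1}^{-1}a_{i+1}$) and hence already lies inside the rigid vertex, so Theorem \ref{conj} identifies $acl^c(A_j)$ with the set of conjugacy classes of elements of $\F_\omega$ that are $\F_\omega$-conjugate to some element of $A_j$, and analogously for $B$. One inclusion is immediate; for the other, assume $g\in A_i$ is $\F_\omega$-conjugate to some $g'\in B$. Conjugacy in the free product $F_1*F_2*F_3$ is detected by cyclic normal form, and $g$ has no $F_3$-syllable, so neither does the cyclic normal form of $g'$. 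Reusing the non-cancellation observation from the previous paragraph, this forces $g'\in A_{i-1}$, so the conjugacy class of $g$ lies in $acl^c(A_{i-1})$.

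I expect the main obstacle to be precisely this final cyclic normal form step: one must check that the $F_3$-syllables produced by any non-trivial power of $[e_{2i},e_{2i+1}][e_{2i+2},e_{2i+3}]$ inside the $B$-normal form of $g'$ genuinely survive cyclic reduction in $F_1*F_2*F_3$, even when several such powers occur separated by factors from $A_{i-1}$. Once this point is pinned down, Theorem \ref{AlgConj} combines the two equalities and yields Lemma \ref{D4}.
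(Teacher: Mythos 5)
Your proof follows the paper's argument exactly: reduce via Theorem \ref{AlgConj} to the real and conjugacy-class algebraic closures, identify these with $\langle a_0,\ldots,a_i\rangle$ and $\langle a_0,\ldots,a_{i-1},a_{i+1}\rangle$ (up to conjugacy in the $acl^c$ case) using Theorems \ref{AlgClos} and \ref{conj} together with Lemmas \ref{gog1} and \ref{gog2}, and then compute the two intersections. The only difference is that you spell out the free-product normal-form and cyclic-reduction computations that the paper merely asserts, and your syllable-counting argument there is correct.
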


\begin{proof}
We first note that by Theorem \ref{AlgClos} and Lemmata \ref{gog1},\ref{gog2} we have that $acl(a_0,\ldots,a_{i-1},a_i)=\langle a_0,\ldots,a_{i-1},a_i \rangle$ 
and $acl(a_0,\ldots,a_{i-1},a_{i+1})=\langle a_0,\ldots,a_{i-1},a_{i+1} \rangle$. Thus, their intersection is $\langle a_0,\ldots,a_{i-1}\rangle$ which is 
exactly $acl(a_0,\ldots,a_{i-1})$. 

By Theorem \ref{AlgConj} we only need to show that 
$acl^c(a_0,\ldots,a_{i-1},a_i)\cap acl^c(a_0,$ $\ldots,a_{i-1},a_{i+1})=acl^c(a_0,\ldots,a_{i-1})$. But, by Theorem \ref{conj} and 
Lemmata \ref{gog1},\ref{gog2} we have that $acl^c(a_0,\ldots,a_{i-1},a_i)$ is exactly the set of conjugacy classes of elements in 
$\langle a_0,\ldots,a_{i-1},a_i \rangle$ and $acl^c(a_0,\ldots,a_{i-1},a_{i+1})$ is exactly the set of conjugacy classes of elements 
in $\langle a_0,\ldots,a_{i-1},a_{i+1}\rangle$. Thus, their intersection is the set of conjugacy classes of elements in $\langle a_0,\ldots,a_{i-1}\rangle$, 
which is exactly $acl^c(a_0,\ldots,a_{i-1})$.
\end{proof}

Putting everything together, our Lemmata \ref{D1},\ref{D2},\ref{D3},\ref{D4}, show that our sequence $(a_i)_{i<\omega}$ witnesses 
$n$-ampleness in the theory of non abelian free groups for each $n<\omega$. And thus giving an alternative sequence to 
the one used in \cite{AmpleOK}.

\begin{thm}
$T_{fg}$ is $n$-ample for each $n<\omega$.
\end{thm}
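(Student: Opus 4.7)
The plan is to take the explicit sequence $a_0 = e_1$ and $a_{i+1} = a_i[e_{2i+2}, e_{2i+3}]$ in $\F_\omega$ and verify, for each fixed $n$, the four clauses of Definition \ref{Ample} in turn; the already-proved Lemmata \ref{D1}--\ref{D4} do exactly this, so the theorem reduces to assembling them. Each $a_i$ is primitive in $\F_{2i+1}$, so $\langle a_i\rangle$ is a free factor, and consecutive pairs $(a_i, a_{i+1})$ sit in a transparent cyclic splitting traceable back to Example \ref{Exa}.

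Clauses (1) and (2) are the easy ones. Lemma \ref{D1} handles (1): the product $[e_2,e_3]\cdots[e_{2n},e_{2n+1}]$ is not primitive, so by Theorem \ref{GenPil} it is not generic, which rules out non-forking between $a_0$ and $a_n$. Lemma \ref{D2} handles (2) by exhibiting the explicit free decomposition $\F_{2i+3} = \langle e_2,\ldots,e_{2i+1}\rangle * \langle a_i\rangle * \langle e_{2i+2}, e_{2i+3}\rangle$ and invoking Theorem \ref{Fork} together with Theorem \ref{ForkPil}.

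Clauses (3) and (4) are the technical core and are handled by Lemmata \ref{D3} and \ref{D4}. Via Theorem \ref{AlgConj}, each $acl^{eq}$ equality splits into an equality of real algebraic closures and one of conjugacy-class closures $acl^c$. For both parts, one relies on the relative JSJ computations of Lemmata \ref{gog1} and \ref{gog2}, which build the JSJ of $\F_{2i+1}$ (resp.\ $\F_{2i+3}$) relative to the required tuple inductively from the base splittings of Example \ref{Exa}(i)--(ii) via the gluing Lemma \ref{JSJ} and Remark \ref{RelJSJ}. Once the JSJ's are in hand, Theorem \ref{AlgClos} identifies the real algebraic closures as the obvious subgroups and Theorem \ref{conj} identifies $acl^c$ with their conjugacy classes; the intersections collapse to $\langle a_0,\ldots,a_{i-1}\rangle$ (and to the trivial subgroup in the base case $i = 0$).

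The main obstacle is the JSJ bookkeeping behind Lemmata \ref{gog1}--\ref{gog2}: verifying universal ellipticity of the candidate splittings and checking that the surface-type flexible vertex arising from Example \ref{Exa}(ii) does not introduce any new element into the algebraic or conjugacy-class closure beyond the listed generators. Once this is settled, combining Lemmata \ref{D1}--\ref{D4} directly verifies Definition \ref{Ample} for the fixed $n$, and since $n$ was arbitrary this establishes $n$-ampleness of $T_{fg}$ uniformly for all $n<\omega$.
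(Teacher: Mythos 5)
Your proposal is correct and follows essentially the same route as the paper: the theorem is just the assembly of Lemmata \ref{D1}--\ref{D4}, which verify the four clauses of Definition \ref{Ample} for the sequence $a_0=e_1$, $a_{i+1}=a_i[e_{2i+2},e_{2i+3}]$, exactly as you describe. (Your citing of Theorem \ref{Fork} alongside Theorem \ref{ForkPil} for clause (2) is if anything slightly more precise than the paper's own wording.)
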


\begin{rmk}
Actually we have produced a family of sequences witnessing the $n$-ampleness of $T_{fg}$ for each $n<\omega$ as 
instead of once punctured tori we could have used any other once punctured surface (with a few exceptions of some obvious small surfaces).
\end{rmk}

\paragraph{Acknowledgements.} We wish to thank Chlo\'e Perin for a thorough reading of a first version of this paper and for spotting a 
misquotation.

\bibliography{biblio}
\ \\ \\ 
Hebrew University of Jerusalem,\\
Einstein Institute of Mathematics,\\
91904, Israel\\
rsklinos@math.huji.ac.il

\end{document}